\documentclass[12pt,twoside]{article}
\usepackage[latin1]{inputenc}
\usepackage[english]{babel}
\usepackage{clm-macros}
\usepackage{amsmath}
\usepackage{psfrag}
\usepackage{epsfig}
\usepackage{graphicx}
\usepackage{verbatim}
\usepackage{tikz}
\usepackage{mathtools}
\usepackage{datetime}
\usepackage{fullpage}
\usepackage{enumerate}
\usepackage{subfigure}

\usetikzlibrary{decorations}
\usetikzlibrary{decorations.pathmorphing}
\usetikzlibrary{arrows}
\tikzstyle{every node}=[circle, draw, fill=white,inner sep=0pt, minimum width=6pt]
\tikzstyle{nodelabel}=[rounded corners,fill=none,inner sep=5pt,draw=none]
\tikzstyle{matching} = [ultra thick]
\tikzset{snake/.style={decorate, decoration=snake}}

\newdateformat{UKvardate}{
\THEDAY\ \monthname[\THEMONTH], \THEYEAR}

\begin{document}

\newcommand{\pema}{perfect matching}
\newcommand{\mcg}{matching covered graph}
\newcommand{\mc}{matching covered}
\newcommand{\ec}{equivalence class}
\newcommand{\rc}{removable class}
\newcommand{\ecp}{equivalence class partition}
\newcommand{\tcd}{tight cut decomposition}
\newcommand{\scd}{separating cut decomposition}

\title{Equivalence classes in matching covered graphs\footnote{This research is supported by
National Natural Science Foundation of China (Grant No. 11671186); by NSF of Shandong Province, China (ZR2013LA007); and by FAPESP of S{\~a}o Paulo, Brazil (2018/04679-1).}}
\author{Fuliang Lu$^{a}$
            \and
            Nishad Kothari$^{b}$
            \and
            Xing Feng$^{c}$
            \and
            Lianzhu Zhang$^{d}$\\
{\small $^{a}$ School of Mathematics and Statistics, Minnan Normal University, Zhangzhou,  China}\\
{\small $^{b}$ Institute of Computing, University of Campinas, Campinas, Sao Paulo, Brazil}\\
{\small $^{c}$ Faculty of Science, Jiangxi University of Science and Technology, Ganzhou, China}\\
{\small $^{d}$ School of Mathematical Science, Xiamen University, Xiamen, Fujian, China}}

\UKvardate
\date{14 December, 2019}
  \maketitle
  \thispagestyle{empty}

\begin{abstract}
A connected graph~$G$, of order two or more, is \mc\ if each edge lies in some \pema.
The tight cut decomposition of a \mcg~$G$ yields a list of bricks and braces; as per a theorem
of Lov{\'a}sz~\cite{lova87}, this list is unique (up to multiple edges); $b(G)$ denotes
the number of bricks, and $c_4(G)$ denotes the number of braces that are isomorphic to
the cycle~$C_4$
(up to multiple edges).

\smallskip
Two edges $e$ and $f$ are mutually dependent if, for each \pema~$M$,
$e \in M$ if and only if $f \in M$;
Carvalho, Lucchesi and Murty investigated this notion in their landmark paper~\cite{clm99}.
For any \mcg~$G$, mutual dependence is an equivalence
relation, and it partitions $E(G)$ into {\ec}es;
this \ecp\ is denoted by $\mathcal{E}_G$ and we refer to its parts as {\ec}es
of~$G$; we use $\varepsilon(G)$ to denote the cardinality of the largest \ec.

\smallskip
The operation of `splicing' may be used to construct bigger {\mcg}s from smaller ones;
see~\cite{lckm18}; `tight splicing' is a stronger version of `splicing'.
(These are converses of the notions of `separating cut' and `tight cut'.)
In this article, we answer the following basic question:
if a \mcg~$G$ is obtained by `splicing' (or by `tight splicing')
two smaller {\mcg}s, say~$G_1$~and~$G_2$,
then how is $\mathcal{E}_G$ related to~$\mathcal{E}_{G_1}$ and to~$\mathcal{E}_{G_2}$
(and vice versa)?

\smallskip
As applications of our findings: firstly, we establish tight upper bounds on $\varepsilon(G)$
in terms of $b(G)$ and $c_4(G)$; secondly, we answer a recent question of He, Wei,
Ye and Zhai~\cite{hwyz19}, in the affirmative, by constructing graphs that have arbitrarily high
$\kappa(G)$~and~$\varepsilon(G)$ simultaneously,
where $\kappa(G)$ denotes the vertex-connectivity.
\end{abstract}

\tableofcontents

\section{Matching covered graphs}

For general graph-theoretic terminology, we follow Bondy and Murty~\cite{bomu08}.
All graphs considered here are finite and loopless; however, we do allow multiple edges.
We begin by reviewing some important terminology and notation.

\smallskip
For a subset $X$ of the vertex set~$V(G)$ of a graph~$G$, we use $N_G(X)$, or simply~$N(X)$,
to denote
the set of vertices that have at least one neighbor in~$X$; and we use ${\rm odd}(G-X)$
to denote the number of components of odd order in the graph~$G-X$.
Furthermore, we use
$\partial_G(X)$, or simply~$\partial(X)$,
to denote the set of edges of~$G$
that have exactly one end in~$X$; such a set $\partial(X)$ is called
a {\it cut} of~$G$;
the sets $X$ and $\overline{X}:=V(G)-X$ are referred to as the {\it shores} of~$\partial(X)$.
For a vertex~$v$, we simplify the notation~$\partial(\{v\})$ to $\partial(v)$.
A cut~$C$ is {\it trivial} if either shore has just one vertex; otherwise $C$ is {\it nontrivial}.
A cut~$C$ is a {\it $k$-cut} if $|C|=k$.

\smallskip
A graph is {\it matchable} if it has a \pema.
Tutte's $1$-factor Theorem states that a graph~$G$ is matchable if and only ${\rm odd}(G-S) \leq |S|$
for every $S \subseteq V(G)$.
For a graph~$G$, a subset $B \subseteq V(G)$
is called a {\it barrier} if ${\rm odd}(G-B) = |B|$.

\smallskip
An edge~$e$ of a graph~$G$ is {\it admissible} if it lies in some \pema\ of~$G$;
otherwise $e$ is {\it inadmissible}.
The following is easily proved using Hall's Theorem; see Figure~\ref{fig:bip-inadmissible}.
\begin{lem}
\label{lem:bip-inadmissible}
Let $H[A,B]$ denote a bipartite matchable graph. An edge~$e$ of~$H$ is inadmissible
if and only if there exists a nonempty proper subset~$S \subset A$ such that $|N_H(S)| = |S|$
and $e$ has one end in~$N_H(S)$ and its other end is not in~$S$. \qed
\end{lem}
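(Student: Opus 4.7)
The plan is to prove both directions of the equivalence by reducing to Hall's theorem, which follows from the statement of Tutte's $1$-factor theorem given just above when specialized to bipartite graphs.

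For the sufficiency direction, I would assume a nonempty proper $S \subset A$ with $|N_H(S)|=|S|$ is given, and that $e$ has one end $w \in N_H(S)$ and the other end $u \in A \setminus S$. The key observation is that in \emph{any} perfect matching~$M$ of~$H$, each vertex of~$S$ can only be matched along an edge into~$N_H(S)$, and since $|S|=|N_H(S)|$, the matching $M$ necessarily induces a bijection between~$S$ and~$N_H(S)$. Consequently, the partner of~$w$ in~$M$ lies in~$S$; since $u \notin S$, the edge~$e=uw$ cannot belong to~$M$. As this argument is independent of the choice of~$M$, the edge~$e$ is inadmissible.

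For the necessity direction, I would start from an inadmissible edge $e=uv$ with $u \in A$ and $v \in B$. Then $H-u-v$ has no perfect matching, so Hall's theorem produces a set $S \subseteq A \setminus \{u\}$ with $|N_{H-u-v}(S)| < |S|$. Because $N_H(S) \subseteq N_{H-u-v}(S) \cup \{v\}$, this yields $|N_H(S)| \leq |S|$; but since~$H$ itself is matchable, Hall's theorem also gives $|N_H(S)| \geq |S|$, forcing equality. I would then verify the remaining side conditions: $S$ is nonempty (otherwise the strict inequality $|N_{H-u-v}(S)|<|S|$ is vacuous), $S$ is a proper subset of~$A$ because $u \notin S$, the $A$-end~$u$ of~$e$ is outside~$S$ by construction, and the $B$-end~$v$ must lie in~$N_H(S)$ (otherwise deleting~$v$ would not change the neighborhood, contradicting $|N_H(S)|=|S|>|N_{H-u-v}(S)|$).

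There is no real obstacle; the argument is a fairly direct application of Hall's theorem in both directions. The only points requiring a little care are the bookkeeping items in the second paragraph, namely showing that $S$ is nonempty and proper and that~$v \in N_H(S)$. These small checks are what make the statement of the lemma clean and immediately applicable in later arguments.
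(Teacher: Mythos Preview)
Your proof is correct and follows exactly the approach the paper indicates: the lemma is stated with a \qed and the remark that it ``is easily proved using Hall's Theorem,'' and both of your directions do precisely that. The bookkeeping checks you flag (nonemptiness and properness of~$S$, and $v \in N_H(S)$) are exactly the small details needed to complete the argument.
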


A connected graph, of order two or more, is {\it \mc} if each edge is admissible.
The following fact is easily deduced from Lemma~\ref{lem:bip-inadmissible}.

\begin{prp}
\label{prp:bip-mcg-characterization}
For a bipartite matchable graph~$H[A,B]$, of order four or more, the following statements
are equivalent:
\begin{enumerate}[(i)]
\item $H$ is \mc.
\item $|N_H(S)| \geq |S|+1$ for every nonempty proper subset~$S$ of~$A$.
\item $H-a-b$ is matchable for each pair of vertices $a \in A$ and $b \in B$. \qed
\end{enumerate}
\end{prp}

Using Tutte's $1$-factor Theorem, one may prove that a connected matchable graph~$G$
is \mc\ if and only if every barrier of~$G$ is stable (that is, an independent set).
Kotzig proved the following fundamental theorem: the maximal barriers of a
\mcg~$G$ partition its vertex-set~$V(G)$; this partition of~$V(G)$
is called the {\it canonical partition} of~$G$.
(See Lov{\'a}sz and Plummer~\cite[page 150]{lopl86}.)

\smallskip
In this article,
our main focus is the `equivalence class partition' --- a partition
of the edge-set~$E(G)$ of a \mcg~$G$ --- that was formally introduced and investigated by
Carvalho, Lucchesi and Murty in their landmark paper~\cite{clm99}.

\begin{figure}[!htb]
\centering
\subfigure[$e$ is inadmissible]
{
\begin{tikzpicture}[scale=0.7]
\draw (0,0) -- (4,0) -- (4,1) -- (0,1) -- (0,0);
\draw (6,0) -- (10,0) -- (10,1) -- (6,1) -- (6,0);
\draw (2,0.5)node[below,nodelabel]{$N(S)$};

\draw (0,3) -- (4,3) -- (4,4) -- (0,4) -- (0,3);
\draw (6,3) -- (10,3) -- (10,4) -- (6,4) -- (6,3);
\draw (2,4)node[above,nodelabel]{$S$};

\draw (3.7,1.8)node[nodelabel]{$e$};

\draw[ultra thin] (3.5,0.5) -- (7.5,3.5);
\draw[ultra thin] (2.5,0.5) -- (6.5,3.5);

\draw[ultra thin] (1.5,0.5) -- (1.5,3.5);
\draw[ultra thin] (0.5,0.5) -- (0.5,3.5);
\draw[ultra thin] (8.5,0.5) -- (8.5,3.5);
\draw[ultra thin] (9.5,0.5) -- (9.5,3.5);
\end{tikzpicture}
\label{fig:bip-inadmissible}
}
\hspace*{0.5in}
\subfigure[$\partial(X)$ is a tight cut; $B \cap X$ is a barrier]
{
\begin{tikzpicture}[scale=0.7]

\draw (0,0) -- (4,0) -- (4,-1) -- (0,-1) -- (0,0);
\draw (2,-0.75) node[above,nodelabel]{$B \cap X$};

\draw (0,-2.5) -- (3.2,-2.5) -- (3.2,-3.5) -- (0,-3.5) -- (0,-2.5);
\draw (1.6,-2.75) node[below,nodelabel]{$A \cap X$};

\draw (5.2,-2.5) -- (9.2,-2.5) -- (9.2,-3.5) -- (5.2,-3.5) -- (5.2,-2.5);
\draw (7.2,-2.75) node[below,nodelabel]{$A \cap \overline{X}$};

\draw (6,0) -- (9.2,0) -- (9.2,-1) -- (6,-1) -- (6,0);
\draw (7.6,-0.75) node[above,nodelabel]{$B \cap \overline{X}$};

\draw[ultra thin] (1,-0.5) -- (1,-3);
\draw[ultra thin] (1.7,-0.5) -- (1.7,-3);
\draw[ultra thin] (2.4,-0.5) -- (6.1,-3);
\draw[ultra thin] (3.1,-0.5) -- (6.8,-3);
\draw[ultra thin] (7.5,-3) -- (7.5,-0.5);
\draw[ultra thin] (8.2,-3) -- (8.2,-0.5);

\end{tikzpicture}
\label{fig:bip-tight-cut}
}

\vspace*{0.25in}
\subfigure[$e$ depends on $f$]
{
\begin{tikzpicture}[scale=0.7]
\draw (0,0) -- (4,0) -- (4,1) -- (0,1) -- (0,0);
\draw (6,0) -- (10,0) -- (10,1) -- (6,1) -- (6,0);
\draw (2,0)node[below,nodelabel]{$A_0$};
\draw (8,0)node[below,nodelabel]{$A_1$};

\draw (0,3) -- (4,3) -- (4,4) -- (0,4) -- (0,3);
\draw (6,3) -- (10,3) -- (10,4) -- (6,4) -- (6,3);
\draw (2,4)node[above,nodelabel]{$B_0$};
\draw (8,4)node[above,nodelabel]{$B_1$};

\draw[ultra thick] (3.5,3.5) -- (6.5,0.5);
\draw (4.7,3)node[nodelabel]{$f$};
\draw (3.7,1.8)node[nodelabel]{$e$};

\draw[ultra thin] (3.5,0.5) -- (7.5,3.5);
\draw[ultra thin] (2.5,0.5) -- (6.5,3.5);

\draw[ultra thin] (1.5,0.5) -- (1.5,3.5);
\draw[ultra thin] (0.5,0.5) -- (0.5,3.5);
\draw[ultra thin] (8.5,0.5) -- (8.5,3.5);
\draw[ultra thin] (9.5,0.5) -- (9.5,3.5);
\end{tikzpicture}
\label{fig:bip-dependence}
}
\hspace*{0.5in}
\subfigure[$e$ and $f$ are mutually dependent]
{
\begin{tikzpicture}[scale=0.7]
\draw (0,0) -- (4,0) -- (4,1) -- (0,1) -- (0,0);
\draw (6,0) -- (10,0) -- (10,1) -- (6,1) -- (6,0);
\draw (2,0)node[below,nodelabel]{$A_0$};
\draw (8,0)node[below,nodelabel]{$A_1$};

\draw (0,3) -- (4,3) -- (4,4) -- (0,4) -- (0,3);
\draw (6,3) -- (10,3) -- (10,4) -- (6,4) -- (6,3);
\draw (2,4)node[above,nodelabel]{$B_0$};
\draw (8,4)node[above,nodelabel]{$B_1$};

\draw[ultra thick] (3.5,3.5) -- (6.5,0.5);
\draw (4.7,3)node[nodelabel]{$f$};


\draw[ultra thick] (3.5,0.5) -- (6.5,3.5);
\draw (4.6,1.1)node[nodelabel]{$e$};

\draw[ultra thin] (1.5,0.5) -- (1.5,3.5);
\draw[ultra thin] (0.5,0.5) -- (0.5,3.5);
\draw[ultra thin] (8.5,0.5) -- (8.5,3.5);
\draw[ultra thin] (9.5,0.5) -- (9.5,3.5);
\end{tikzpicture}
\label{fig:bip-mutual-dependence}
}
\caption{Illustrations of various concepts in bipartite graphs}
\label{fig:bip}
\end{figure}
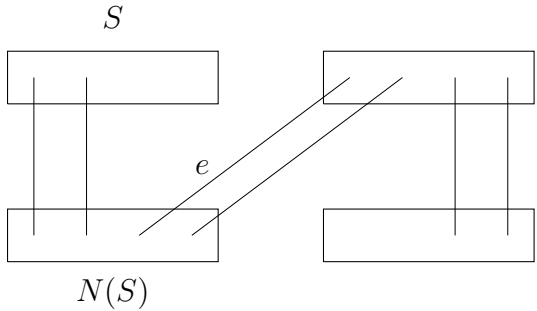
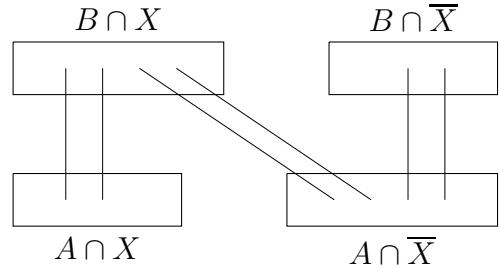
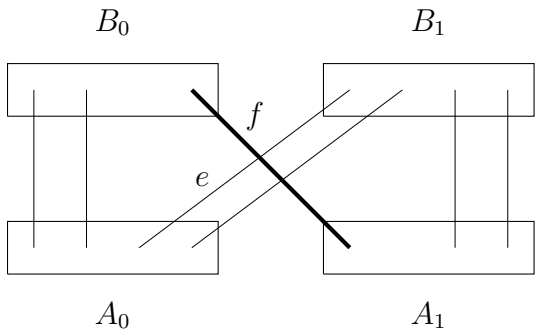
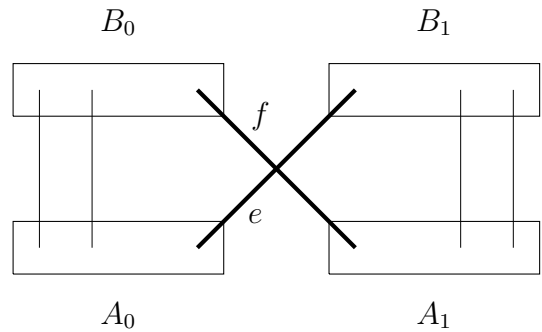

\subsection{The equivalence class partition $\mathcal{E}_G$}

For a cut $C:=\partial(X)$ of a \mcg~$G$, the parities of~$|X|$ and $|\overline{X}|$
are the same; we say that $C$ is an {\it odd cut} if $|X|$ is odd; otherwise, $C$ is an {\it even cut}.

\smallskip
For a \mcg~$G$, an edge~$e$ {\it depends on} an edge~$f$, denoted as
$e \xrightarrow{G} f$, if each \pema\
that contains $e$ also contains~$f$.
Note that, for two distinct edges $e$ and $f$, $e \xrightarrow{G} f$
if and only if $e$ is inadmissible in~$G-f$.
The following is easily deduced from Lemma~\ref{lem:bip-inadmissible}
and Proposition~\ref{prp:bip-mcg-characterization}; see Figure~\ref{fig:bip-dependence}.

\begin{cor}
\label{cor:bip-dependence}
Let $e$ and $f$ denote distinct edges of a bipartite \mcg\ $H[A,B]$.
Then $e$ depends on~$f$ if and only if there exist partitions $(A_0,A_1)$ of~$A$
and $(B_0,B_1)$ of~$B$ such that (i) $|A_0| = |B_0|$, (ii) $e$ joins a vertex in~$A_0$
to a vertex in~$B_1$, and (iii) $f$ is the only edge that joins a vertex in~$B_0$
to a vertex in~$A_1$. \qed
\end{cor}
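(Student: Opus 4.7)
My plan is to treat the two directions separately, with a short counting argument for sufficiency and Hall's theorem (combined with the strict expansion from Proposition~\ref{prp:bip-mcg-characterization}(ii)) for necessity. For the sufficiency direction, I would let $M$ be any perfect matching of $H$ containing $e$ and note that $|A_1| = |B_1|$ (since $|A_0| = |B_0|$ and $|A| = |B|$). If $f \notin M$, then condition~(iii) forces every vertex of $A_1$ to be $M$-matched into $B_1$, so all of $B_1$ is saturated by $A_1$; but then $v_1 \in B_1$ would be $M$-matched into $A_1$, contradicting the fact that $v_1$ is matched to $u_1 \in A_0$ by $e$.

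For necessity, I would first observe that $e$ and $f$ must be vertex-disjoint: otherwise, since $H$ is matching covered, some perfect matching contains $e$ and such a matching cannot contain both $e$ and $f$, violating $e \xrightarrow{H} f$. Writing $e = u_1 v_1$ and $f = u_2 v_2$, the dependence is equivalent to the bipartite graph $H^\star := H - u_1 - v_1 - f$ (with parts of equal size) having no perfect matching. Hall's theorem then supplies a nonempty $T$ which, by the symmetry of Hall's condition, I may take inside $A - u_1$, with $|N_{H^\star}(T)| < |T|$. Since $T$ is a nonempty proper subset of $A$, Proposition~\ref{prp:bip-mcg-characterization}(ii) yields $|N_H(T)| \geq |T|+1$. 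The only vertices of $N_H(T)$ that can fail to lie in $N_{H^\star}(T)$ are $v_1$ (only if $v_1 \in N_H(T)$) and $v_2$ (only if $u_2 \in T$ and $u_2$ is the unique neighbor of $v_2$ in $T$); hence $|N_H(T)| - |N_{H^\star}(T)| \leq 2$, and pinching this against $|N_{H^\star}(T)| \leq |T|-1$ and $|N_H(T)| \geq |T|+1$ forces equality throughout: $|N_H(T)| = |T|+1$, $v_1 \in N_H(T)$, $u_2 \in T$, and $u_2$ is the only neighbor of $v_2$ in $T$.

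Finally, I would set $A_1 := T$, $A_0 := A - T$, $B_1 := N_H(T) - \{v_2\}$, and $B_0 := (B - N_H(T)) \cup \{v_2\}$. Then $|A_0| = |A| - |T| = |B| - (|N_H(T)| - 1) = |B_0|$, yielding~(i); $u_1 \in A_0$ and $v_1 \in B_1$ yield~(ii); and no edge joins $T$ to $B - N_H(T)$ while the only edge joining $T$ to $v_2$ is $f$, yielding~(iii). The main obstacle is the pinching argument that forces $|N_H(T)| = |T|+1$ together with the exact incidences at $v_1$ and $v_2$; once those are secured, the partition reads off directly from $T$ and $N_H(T)$.
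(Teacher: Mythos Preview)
Your proof is correct. The paper gives no detailed argument for this corollary --- it merely states that the result ``is easily deduced from Lemma~\ref{lem:bip-inadmissible} and Proposition~\ref{prp:bip-mcg-characterization}'' and appends a \qed\ --- so your write-up is essentially a faithful unpacking of that sketch: the paper's intended route is to note that $e \xrightarrow{H} f$ iff $e$ is inadmissible in $H-f$ and then invoke Lemma~\ref{lem:bip-inadmissible}, whereas you pass directly to $H^\star = H - u_1 - v_1 - f$ and apply Hall together with the strict expansion of Proposition~\ref{prp:bip-mcg-characterization}(ii), which amounts to the same pinching argument and even sidesteps the minor nuisance of checking that $H-f$ is matchable.
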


\smallskip
For a \mcg~$G$, edges $e$ and $f$ are {\it mutually dependent}, denoted as
$e \xleftrightarrow{G} f$, if $e \xrightarrow{G} f$ and $f \xrightarrow{G} e$.
For example:
one may verify that if $\{e,f\}$ is an even $2$-cut of a bipartite \mcg~$H$, then $H-e-f$
has precisely two (matchable) components and $e \xleftrightarrow{H} f$.
The converse also holds, and is easily deduced from Corollary~\ref{cor:bip-dependence};
see Figure~\ref{fig:bip-mutual-dependence}.

\begin{cor}
\label{cor:bip-mutual-dependence}
Let $e$ and $f$ denote distinct edges of a bipartite \mcg~$H$.
The following are equivalent:
\begin{enumerate}[(i)]
\item $e \xleftrightarrow{G} f$.
\item $\{e,f\}$ is an even $2$-cut of~$H$. \qed
\end{enumerate}
\end{cor}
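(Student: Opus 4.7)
I would prove the corollary as two implications, using Corollary~\ref{cor:bip-dependence} as the workhorse.

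For $(ii) \Rightarrow (i)$, suppose $\{e,f\} = \partial(X)$ is an even $2$-cut. Set $A_0 := A \cap X$, $B_0 := B \cap X$, $A_1 := A \setminus A_0$, $B_1 := B \setminus B_0$. Each edge of $\partial(X)$ runs either from $A_0$ to $B_1$ or from $B_0$ to $A_1$. Using any perfect matching $M$ of $H$ and counting, in $X$, the vertices matched across the cut against those matched within, I would first argue that $e$ and $f$ cannot both cross in the same direction, for otherwise one computes $|A_0| = |B_0| \pm 1$ and $|X|$ would be odd. So, up to relabeling, $e$ joins $A_0$ to $B_1$ and $f$ joins $B_0$ to $A_1$, and the same count gives $|A_0| = |B_0|$. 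Corollary~\ref{cor:bip-dependence} with this partition now directly yields $e \xrightarrow{H} f$. For the reverse dependence, I apply Corollary~\ref{cor:bip-dependence} with the flipped partition $A_0' := A_1$, $A_1' := A_0$, $B_0' := B_1$, $B_1' := B_0$, which satisfies the hypotheses with $f$ playing the role of ``joining edge'' and $e$ the role of ``unique edge''; this gives $f \xrightarrow{H} e$.

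For $(i) \Rightarrow (ii)$, suppose $e \xleftrightarrow{H} f$. By Corollary~\ref{cor:bip-dependence} applied to $e \xrightarrow{H} f$, there exist partitions $(A_0, A_1)$ of $A$ and $(B_0, B_1)$ of $B$ with $|A_0| = |B_0|$, with $e$ joining $A_0$ to $B_1$, and with $f$ the unique $B_0$-to-$A_1$ edge. Set $X := A_0 \cup B_0$; then $|X| = 2|A_0|$ is even, and $\{e, f\} \subseteq \partial(X)$. The goal is to show $\partial(X) = \{e, f\}$, that is, $e$ is the unique $A_0$-to-$B_1$ edge of $H$.

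The key step is a counting argument: for any perfect matching $M$ of $H$, the number of $A_0$-to-$B_1$ edges in $M$ equals the number of $B_0$-to-$A_1$ edges in $M$, because the unmatched-across vertices of $A_0$ and of $B_0$ must pair up inside $X$ and $|A_0| = |B_0|$. Since $f$ is the unique $B_0$-to-$A_1$ edge, $M$ contains $f$ iff $M$ contains exactly one $A_0$-to-$B_1$ edge. Suppose, for contradiction, that some edge $e' \ne e$ joins $A_0$ to $B_1$. Since $H$ is matching covered, pick a perfect matching $M' \ni e'$; by the count it must contain $f$ and exactly one $A_0$-to-$B_1$ edge (namely $e'$), so $e \notin M'$. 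But this contradicts $f \xrightarrow{H} e$. Hence $e$ is the unique $A_0$-to-$B_1$ edge, so $\partial(X) = \{e,f\}$ is an even $2$-cut.

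The main obstacle is the last step: ruling out additional cut edges is not a direct application of Corollary~\ref{cor:bip-dependence}, which only promises that \emph{some} partition exists; the argument above must combine that partition with the converse dependence $f \xrightarrow{H} e$ and the matching coveredness of $H$ to pin down $|\partial(X)|$ exactly.
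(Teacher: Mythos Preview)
Your argument is correct and, for the harder direction $(i)\Rightarrow(ii)$, is exactly how one fleshes out the paper's bare reference to Corollary~\ref{cor:bip-dependence} and Figure~\ref{fig:bip-mutual-dependence}: obtain the partition from $e\xrightarrow{H}f$, use the balance $|A_0|=|B_0|$ to see that any perfect matching contains equally many $A_0$--$B_1$ and $B_0$--$A_1$ edges, and combine matching coveredness with $f\xrightarrow{H}e$ to rule out a second $A_0$--$B_1$ edge.

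For $(ii)\Rightarrow(i)$ the paper takes a slightly more direct route than you do: it simply notes that $|M\cap\partial(X)|\equiv|X|\equiv 0\pmod 2$, so $|M\cap\{e,f\}|\in\{0,2\}$ for every perfect matching~$M$, which immediately gives mutual dependence. Your approach via Corollary~\ref{cor:bip-dependence} also works, but be aware that the sentence ``otherwise one computes $|A_0|=|B_0|\pm 1$'' is hiding a small case analysis: a priori $|M\cap\{e,f\}|$ could be $0$ or $2$ as well, and you are implicitly using matching coveredness (to rule out $0$) and $2$-connectedness (to get $\deg\ge 2$ at an end of~$e$, ruling out $2$) before the parity contradiction kicks in.
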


Observe that
{\it mutual dependence} is an equivalence relation;
whence it partitions the edge set $E(G)$ into {\ec}es.
Throughout this article, we refer to this partition of~$E(G)$,
denoted as $\mathcal{E}_G$,
as the {\it \ecp} of a \mcg~$G$,
and we refer to its parts as the {\it {\ec}es} of~$G$;
we also use the notation~$\varepsilon(G)$ to denote the cardinality
of the largest member of~$\mathcal{E}_G$.
Corollary~\ref{cor:bip-mutual-dependence} yields the following consequence.

\begin{cor}
\label{cor:bip-ec}
For a bipartite \mcg~$H$, the following are equivalent:
\begin{enumerate}[(i)]
\item $\varepsilon(H) = 1$.
\item $H$ is free of even $2$-cuts. \qed
\end{enumerate}
\end{cor}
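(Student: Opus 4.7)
The plan is to deduce Corollary~\ref{cor:bip-ec} as an essentially immediate consequence of Corollary~\ref{cor:bip-mutual-dependence}, by rephrasing the condition $\varepsilon(H) = 1$ in terms of pairs of mutually dependent edges.

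First I would observe that, since every edge belongs to its own equivalence class, the statement $\varepsilon(H) = 1$ is equivalent to saying that every equivalence class of $\mathcal{E}_H$ is a singleton, which in turn is equivalent to the statement that no two distinct edges $e$ and $f$ of $H$ satisfy $e \xleftrightarrow{H} f$. This is just the unpacking of the definitions of mutual dependence as an equivalence relation and of $\varepsilon(H)$ as the size of the largest equivalence class.

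Next I would apply Corollary~\ref{cor:bip-mutual-dependence} in both directions. For the forward direction, suppose $\varepsilon(H) = 1$ and, for contradiction, suppose $H$ has an even $2$-cut $\{e,f\}$. By Corollary~\ref{cor:bip-mutual-dependence}, $e \xleftrightarrow{H} f$, so $\{e,f\}$ lies in a common equivalence class of size at least two, contradicting $\varepsilon(H) = 1$. For the converse, suppose $H$ is free of even $2$-cuts; then by Corollary~\ref{cor:bip-mutual-dependence}, no two distinct edges of $H$ are mutually dependent, hence every equivalence class is a singleton and $\varepsilon(H) = 1$.

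There is no real obstacle here; the only point that merits care is the observation that two distinct edges lie in a common equivalence class if and only if they are mutually dependent, which follows directly from the definition of $\mathcal{E}_H$. Thus the corollary reduces to a restatement of Corollary~\ref{cor:bip-mutual-dependence} at the level of the whole partition rather than a single pair of edges.
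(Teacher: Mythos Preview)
Your proposal is correct and follows exactly the paper's approach: the paper states this corollary as an immediate consequence of Corollary~\ref{cor:bip-mutual-dependence} (with no further proof given), and your argument simply spells out that deduction. The only content is the observation that $\varepsilon(H)=1$ means no two distinct edges are mutually dependent, which by Corollary~\ref{cor:bip-mutual-dependence} is the same as having no even $2$-cut.
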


An \ec~$F$ is a {\it singleton} if $|F|=1$, and it is a {\it doubleton} if $|F|=2$.
Thus, in the case of bipartite graphs, edge-connectivity three (or more) already implies
that each \ec\ is a singleton. This is in stark contrast with the case of
nonbipartite graphs ---
as we will see in Section~\ref{sec:arbitrarily-high-kappa-and-varepsilon}.
The reader may verify that, for each of the graphs shown in Figure~\ref{fig:nonbip},
the sets $\{e_1,e_2\}, \{f_1,f_2\}$ and $\{g_1,g_2\}$ are
doubleton {\ec}es;
all other {\ec}es
are singleton.

\smallskip
An edge~$e$ of a \mcg~$G$ is {\it removable} if the graph~$G-e$ is also \mc.
Using the fact that all {\mcg}s, distinct from~$K_2$, are $2$-edge-connected,
one may easily verify the following.
\begin{lem}
For every \mcg~$G$, distinct from~$K_2$, an edge~$e$ is {\it removable}
if and only if no other edge depends on~$e$. \qed
\end{lem}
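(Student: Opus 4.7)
The statement is a biconditional, so I will prove both implications; both follow directly from unpacking the definitions, and the only non-routine ingredient is the hint about $2$-edge-connectedness.

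For the ``only if'' direction, suppose $e$ is removable, so $G-e$ is \mc. Let $f$ be any edge distinct from~$e$. Then $f \in E(G-e)$, and since $G-e$ is \mc, there exists a \pema~$M$ of~$G-e$ that contains~$f$. This~$M$ is also a \pema\ of~$G$, containing~$f$ but avoiding~$e$, which shows that $f$ does not depend on~$e$.

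For the ``if'' direction, assume that no edge distinct from~$e$ depends on~$e$; I wish to conclude that $G-e$ is \mc. There are three things to verify: that $G-e$ is connected, that $|V(G-e)| \ge 2$, and that every edge of~$G-e$ lies in some \pema\ of~$G-e$. The order condition is trivial since $V(G-e)=V(G)$ and $G$ has a \pema, hence even order at least four (as $G \neq K_2$). Connectedness of $G-e$ is the step where the hint is invoked: I will appeal to the fact that every \mcg\ distinct from~$K_2$ is $2$-edge-connected, so removing any single edge leaves a connected graph. Finally, for any edge $f \in E(G-e)$, the hypothesis ensures $f$ does not depend on~$e$, so some \pema\ $M$ of~$G$ contains~$f$ but misses~$e$; such an $M$ is a \pema\ of $G-e$ containing~$f$. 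Hence every edge of~$G-e$ is admissible in~$G-e$, completing the proof.

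Thus the only step that is not a direct translation of definitions is the appeal to $2$-edge-connectedness; the authors flag this as standard (``Using the fact that all {\mcg}s, distinct from~$K_2$, are $2$-edge-connected''), and I would cite it without elaboration. I expect no real obstacle in the argument.
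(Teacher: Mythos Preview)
Your proof is correct and follows exactly the route the paper indicates: the paper gives no proof beyond the hint that {\mcg}s distinct from~$K_2$ are $2$-edge-connected, and your argument is precisely the intended unpacking of the definitions together with that fact. One tiny quibble: in the multigraph setting, $G\neq K_2$ does not force order at least four (two vertices with multiple edges is allowed), but this is harmless since you only need $|V(G-e)|\geq 2$, which holds trivially.
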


Consequently, every removable edge is the member of a
singleton {\ec}. It is worth noting that every multiple edge is in fact a removable edge.
Removable edges play an important role in the theory of {\mcg}s, and especially
in several works of Carvalho, Lucchesi and Murty~\cite{clm99}.
We will further discuss removable edges in Section~\ref{sec:tight-cuts-bricks-braces}.

\smallskip
The following provides another (equivalent) definition of {\ec}es of a \mcg,
and is easily verified.

\begin{prp}
\label{prp:ec-alternative-definition}
For a \mcg~$G$ and a subset $F \subseteq E(G)$, the following are equivalent:
\begin{enumerate}[(i)]
\item $F$ is a (not necessarily proper) subset of some member of~$\mathcal{E}_G$.
\item For each \pema~$M$ of~$G$, either $F \subseteq M$ or $F \cap M = \emptyset$. \qed
\end{enumerate}
\end{prp}

We infer that $\varepsilon(G) \leq n$
for any \mcg~$G$ of order~$2n$.
Observe that $\mathcal{E}_{C_{2n}}$
has precisely two members, each of which is a perfect matching of the even cycle~$C_{2n}$.
Consequently, $\varepsilon(C_{2n}) = n$.
We leave the following as an easy
exercise\footnote{Hint: fix a \pema~$M$ that is an \ec\ (of~$G$) using
Proposition~\ref{prp:ec-alternative-definition}, and consider the symmetric difference
of $M$ and any other {\pema}.}
for the reader to get acquainted
with the notion of {\ec}es.

\begin{prp}
\label{prp:ec-corresponding-to-pm}
Let $G$ denote a \mcg\ of order~$2n$, where $n \geq 2$.
Then \mbox{$\varepsilon(G) = n$} if and only if
(i) the underlying simple graph of $G$ is either $K_4$ or $C_{2n}$ and
(ii)~$G$~has a perfect matching~$M$, each of whose member has multiplicity precisely one. \qed
\end{prp}

An \ec~$R$ of a \mcg~$G$ is a {\it \rc} if the graph \mbox{$G-R$} is also \mc.
Since a \mcg\ is connected (by definition),
Corollary~\ref{cor:bip-mutual-dependence} implies
that each removable class of a bipartite \mcg\ is a singleton.
The following is a consequence of
a Lov{\'a}sz and Plummer~\cite[Lemma 5.4.5]{lopl86}.

\begin{thm}
\label{thm:rc-singleton-or-doubleton}
In every \mcg, each \rc\ is either a singleton or a doubleton.~\qed
\end{thm}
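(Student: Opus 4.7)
The plan is to argue by contradiction. Suppose $R$ is a removable class with $|R| \geq 3$, and fix three distinct edges $e_1, e_2, e_3 \in R$. The goal is to exhibit a perfect matching of $G$ that contains $e_1$ but not $e_2$, thereby contradicting the equivalence-class characterization in Proposition~\ref{prp:ec-alternative-definition}.

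First I would set up two extreme perfect matchings. Since $G$ is \mc\ and $R$ is an \ec, applying Proposition~\ref{prp:ec-alternative-definition} to any \pema\ through $e_1$ yields a \pema~$M$ of $G$ with $R \subseteq M$. Since $R$ is a \rc, $G-R$ is \mc\ and in particular has a \pema~$N$, which is also a \pema\ of $G$ satisfying $N \cap R = \emptyset$. Now $M \triangle N$ is a disjoint union of $M$-alternating even cycles. A short argument shows that \emph{all} of $R$ lies in one such cycle $C$: if $e_i \in C_1$ and $e_j \in C_2$ with $C_1 \neq C_2$, then $N \triangle C_1$ is a \pema\ of $G$ containing $e_i$ but not $e_j$ (the $M$-edges of $C_1$ are swapped in, while $e_j \notin C_1$ and $e_j \notin N$), contradicting $e_i \xleftrightarrow{G} e_j$. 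Hence $C$ is an $M$-alternating even cycle of length at least $2|R| \geq 6$, with $e_1, e_2, e_3$ among its $M$-edges.

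The heart of the proof is then to turn this structural fact into an actual violating \pema, and this is where Lov{\'a}sz and Plummer~\cite[Lemma~5.4.5]{lopl86} enter. The intended plan is to vary $N$: exploiting the matching-cover of $G-R$, choose a second \pema~$N'$ of $G-R$ so that the alternating cycle through $R$ in $M \triangle N'$ visits the edges of $R$ in a \emph{different} cyclic order than $C$ did. The edge-disjoint union of the two alternating cycles (through $R$) then admits a decomposition into two $M$-alternating closed subgraphs, each meeting $R$ in a proper nonempty subset; flipping one of them on $M$ produces a \pema\ of $G$ containing some but not all edges of $R$, the desired contradiction. The main obstacle is guaranteeing the existence of such an $N'$ — equivalently, ruling out that the cyclic order of $R$-edges on the big cycle is forced to coincide across \emph{all} {\pema}s of $G-R$ when $|R|\geq 3$ — and this rigidity-breaking statement is exactly the content of Lov{\'a}sz and Plummer's Lemma~5.4.5, which is why the theorem invokes it directly rather than re-deriving it.
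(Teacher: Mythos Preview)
The paper does not supply its own proof of this theorem: it is stated with an immediate \qed\ and attributed as ``a consequence of'' Lov\'asz and Plummer~\cite[Lemma~5.4.5]{lopl86}. So there is no in-paper argument to compare your proposal against; both you and the paper ultimately defer the substantive work to that external lemma.

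Your first two steps are correct and standard: the construction of $M\supseteq R$ and $N$ with $N\cap R=\emptyset$ is immediate, and the argument that all of $R$ must lie in a single component of $M\triangle N$ (via flipping one cycle) is sound. The gap is in your third step. First, the two alternating cycles $C$ (in $M\triangle N$) and $C'$ (in $M\triangle N'$) each contain \emph{every} edge of $R$, so calling their union ``edge-disjoint'' is already wrong; and the asserted decomposition of $C\cup C'$ into two $M$-alternating closed subgraphs each meeting $R$ in a nonempty proper subset is not justified (the non-$R$ portions of $C$ and $C'$ may share vertices in ways that obstruct any such recombination). Second, your description of Lemma~5.4.5 as a ``rigidity-breaking'' statement about cyclic orders of $R$-edges along alternating cycles is speculative --- that is not what the lemma says, and you are effectively inventing content for the black box rather than invoking it. What you have written is a plausible narrative wrapped around a citation whose actual statement you have not used; this is not materially more than what the paper itself does, and the added narrative contains errors.
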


Now, let~$R$ denote a removable class of a \mcg~$G$. Observe that $R$ is a singleton
if and only if its (only) member is a removable edge.
On the other hand,
if~$R$ is a doubleton, then $R$ is referred to as a {\it removable doubleton}.
Removable edges and removable doubletons play a crucial role in the theory
of {\mcg}s; see~\cite{clm99}. (It is worth noting that a \mcg\ may have singleton or doubleton
{\ec}es that are not removable.)

\subsection{Splicing and separating cuts}

\begin{figure}[!htb]
\centering
\subfigure[$\overline{C_6} = K_4 \odot K_4$]
{
\begin{tikzpicture}[scale=1.2]

\draw (0,0.5) -- (0,-1);
\draw (-1,1.5) -- (-1,-2);
\draw (1,1.5) -- (1,-2);

\draw (0,0.5) -- (-1,1.5) -- (1,1.5) -- (0,0.5);

\draw (0,-1) -- (-1,-2) -- (1,-2) -- (0,-1);

\draw (0,0.5)node{};
\draw (-1,1.5)node{};
\draw (1,1.5)node{};

\draw (0,-1)node{};
\draw (-1,-2)node{};
\draw (1,-2)node{};

\draw (0,-1.8)node[nodelabel]{$e_1$};
\draw (0,1.3)node[nodelabel]{$e_2$};
\draw (-0.5,-1.1)node[nodelabel]{$f_1$};
\draw (-0.5,0.6)node[nodelabel]{$f_2$};
\draw (0.5,-1.1)node[nodelabel]{$g_1$};
\draw (0.5,0.6)node[nodelabel]{$g_2$};

\draw[ultra thick] (-1.3,-0.25) -- (1.3,-0.25);

\end{tikzpicture}
\label{fig:C6bar}
}
\hspace*{0.5in}
\subfigure[$\overline{C_6} \odot K_4$]
{
\begin{tikzpicture}[scale=1.2]

\draw (0,-0.25) -- (-1,-0.25);

\draw (0,0.5) -- (0,-1);
\draw (-1,1.5) -- (-1,-2);
\draw (1,1.5) -- (1,-2);

\draw (0,0.5) -- (-1,1.5) -- (1,1.5) -- (0,0.5);

\draw (0,-1) -- (-1,-2) -- (1,-2) -- (0,-1);

\draw (0,0.5)node{};
\draw (-1,1.5)node{};
\draw (1,1.5)node{};

\draw (0,-1)node{};
\draw (-1,-2)node{};
\draw (1,-2)node{};

\draw (0,-0.25)node{};
\draw (-1,-0.25)node{};

\draw (0,-1.8)node[nodelabel]{$e_1$};
\draw (0,1.3)node[nodelabel]{$f_2$};
\draw (0.5,-1.1)node[nodelabel]{$f_1$};
\draw (0.5,0.6)node[nodelabel]{$e_2$};

\draw[ultra thick] (-1.3,0.125) -- (1.3,0.125);

\end{tikzpicture}
\label{fig:Bicorn}
}
\hspace*{0.5in}
\subfigure[$K_4 \odot K_{3,3}$]
{
\begin{tikzpicture}[scale=1.2]

\draw (0.75,1.5) -- (-1.5,0) -- (-0.75,1.5) -- (0,0) -- (0.75,1.5) -- (1.5,0) -- (-0.75,1.5);

\draw (0,0) -- (0,-1);
\draw (-1.5,0) -- (-1,-2);
\draw (1.5,0) -- (1,-2);

\draw (0,-1) -- (-1,-2) -- (1,-2) -- (0,-1);

\draw (0.75,1.5)node{};
\draw (-0.75,1.5)node{};

\draw (0,0)node{};
\draw (-1.5,0)node{};
\draw (1.5,0)node{};

\draw (0,-1)node{};
\draw (-1,-2)node{};
\draw (1,-2)node{};

\draw (0,-1.8)node[nodelabel]{$e_1$};
\draw (0.25,-0.6)node[nodelabel]{$e_2$};
\draw (-0.5,-1.1)node[nodelabel]{$f_1$};
\draw (1.5,-1)node[nodelabel]{$f_2$};
\draw (0.5,-1.1)node[nodelabel]{$g_1$};
\draw (-1.5,-1)node[nodelabel]{$g_2$};

\draw[ultra thick] (-1.7,-0.4) -- (1.7,-0.4);

\end{tikzpicture}
\label{fig:K4-splice-K33}
}
\caption{A few nonbipartite {\mcg}s}
\label{fig:nonbip}
\end{figure}

For $i \in \{1,2\}$, let $G_i$ denote a graph with a specified vertex~$v_i$
so that (i) $G_1$~and~$G_2$ are disjoint and (ii) degree of $v_1$ in~$G_1$
is the same as degree of~$v_2$ in~$G_2$. Suppose that $\pi$~is a bijection
from~$\partial_{G_1}(v_1)$ to $\partial_{G_2}(v_2)$.
We denote by $(G_1 \odot G_2)_{v_1,v_2,\pi}$ the graph obtained from the
union of~$G_1-v_1$ and $G_2 - v_2$ by joining, for each edge $e \in \partial_{G_1}(v_1)$,
the end of~$e$ in~$G_1-v_1$ to the end of $\pi(e)$ in $G_2 - v_2$.
We refer to $(G_1 \odot G_2)_{v_1,v_2,\pi}$ as the graph obtained by
{\it splicing $G_1$~at~$v_1$ with $G_2$~at~$v_2$ with respect to the bijection~$\pi$},
or simply as a graph obtained by {\it splicing $G_1$ and $G_2$}.
Thus, $V(G) = (V(G_1) - v_1) \cup (V(G_2) - v_2)$; the corresponding
cut~$C:=\partial(V(G_1)-v_1)$ is referred to as the {\it splicing cut};
the vertices $v_1$~and~$v_2$ are referred to as the {\it splicing vertices}.
The following is easy to see.

\begin{lem}
\label{lem:splicing-simple}
If a graph~$G$ is obtained by splicing two simple graphs, then $G$ is simple, and the corresponding
splicing cut is a matching of~$G$. \qed
\end{lem}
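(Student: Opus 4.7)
The plan is to verify both claims by tracking carefully what the edges of $G := (G_1 \odot G_2)_{v_1,v_2,\pi}$ look like. First I would observe that $E(G)$ partitions naturally into three disjoint pieces: the edges of $G_1 - v_1$, the edges of $G_2 - v_2$, and the set $N$ of new edges added, one for each $e \in \partial_{G_1}(v_1)$. Since the edges of $G_1 - v_1$ have both ends in $V(G_1) - v_1$ and the edges of $G_2 - v_2$ have both ends in $V(G_2) - v_2$, the splicing cut $C := \partial_G(V(G_1) - v_1)$ coincides exactly with $N$, and each member of $N$ has precisely one end in each shore by construction.

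Next I would establish that $C = N$ is a matching of $G$. Suppose, for contradiction, that two distinct new edges corresponding to $e, e' \in \partial_{G_1}(v_1)$ share an endpoint. If they share their end in $V(G_1) - v_1$, say a vertex $u$, then both $e$ and $e'$ join $u$ to $v_1$ in $G_1$, contradicting the simplicity of $G_1$. If they share their end in $V(G_2) - v_2$, then the distinct edges $\pi(e)$ and $\pi(e')$ (distinct because $\pi$ is a bijection) both join the same vertex to $v_2$ in $G_2$, contradicting the simplicity of $G_2$. Hence no two new edges share an endpoint, so $N$ is a matching.

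Finally I would deduce that $G$ is simple. Loops are impossible: edges inside $G_i - v_i$ are loop-free by simplicity of $G_i$, while each new edge joins a vertex of $V(G_1) - v_1$ to a vertex of $V(G_2) - v_2$, and these sets are disjoint. For parallel edges: two parallel edges cannot both lie in $G_i - v_i$ (by simplicity of $G_i$); they cannot straddle an old edge and a new one, because old edges lie in a single shore while new edges cross the cut; and they cannot both be new, since we just showed $N$ is a matching and in particular contains no two edges with the same pair of ends. This completes the sketch. There is no real obstacle here --- the lemma is essentially bookkeeping --- and the only care needed is to invoke both the simplicity of the two factors and the bijectivity of $\pi$ when ruling out coincident endpoints among new edges.
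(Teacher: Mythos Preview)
Your argument is correct and is precisely the natural bookkeeping verification of this lemma. The paper itself does not supply a proof at all --- it simply states the lemma as ``easy to see'' and appends a \qed\ --- so there is nothing to compare against beyond noting that your write-up fills in exactly the routine details the authors chose to omit.
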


The following basic, nonetheless crucial, fact is easily proved.

\begin{lem}
\label{lem:splicing-mcg}
Any graph obtained by splicing two {\mcg}s is also \mc. \qed
\end{lem}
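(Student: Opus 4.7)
The plan is to verify directly that each edge of the spliced graph $G := (G_1 \odot G_2)_{v_1,v_2,\pi}$ is admissible, by gluing together compatible perfect matchings of $G_1$ and $G_2$ across the splicing cut. Connectivity of $G$ is clear from the connectivity of $G_1$ and $G_2$ together with the fact that the splicing cut $C := \partial_G(V(G_1) - v_1)$ is nonempty. The central tool is the following gluing observation: for each $f \in \partial_{G_1}(v_1)$, let $e_f$ denote the corresponding edge of $C$ (joining the end of $f$ in $G_1 - v_1$ to the end of $\pi(f)$ in $G_2 - v_2$). If $M_1$ is a \pema\ of $G_1$ with $M_1 \cap \partial_{G_1}(v_1) = \{f\}$, and $M_2$ is a \pema\ of $G_2$ with $\pi(f) \in M_2$, then
$$M \;:=\; (M_1 \setminus \{f\}) \;\cup\; (M_2 \setminus \{\pi(f)\}) \;\cup\; \{e_f\}$$
is a \pema\ of $G$: the first piece covers $V(G_1) \setminus \{v_1\}$ except the $G_1$-end of $f$, the second piece covers $V(G_2) \setminus \{v_2\}$ except the $G_2$-end of $\pi(f)$, and $e_f$ covers precisely these two leftover vertices. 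Note that exactly one edge of any \pema\ of $G_i$ is incident with $v_i$, so this covers all possibilities.

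With this gluing in hand, it suffices to treat an arbitrary $e \in E(G)$ in three routine cases. If $e \in E(G_1 - v_1)$, then since $G_1$ is \mc, pick a \pema\ $M_1$ of $G_1$ containing $e$; let $f$ be the unique edge of $M_1$ incident with $v_1$; now since $G_2$ is \mc, pick a \pema\ $M_2$ of $G_2$ containing $\pi(f)$; the resulting $M$ contains $e$. The case $e \in E(G_2 - v_2)$ is symmetric. Finally, if $e = e_f$ belongs to the splicing cut, matching-coveredness of $G_1$ and $G_2$ furnishes $M_1 \ni f$ and $M_2 \ni \pi(f)$, and the corresponding $M$ contains $e_f$.

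The main (and only) point to be careful with is the bookkeeping that verifies $M$ is genuinely a \pema\ of $G$ --- that is, that the three pieces are pairwise disjoint and that their union saturates every vertex of $G$ exactly once. This reduces to the observation that $V(G)$ is partitioned by the shores of $C$ into $V(G_1) - v_1$ and $V(G_2) - v_2$, so the two ``inner'' pieces are automatically disjoint, and the unique vertex left uncovered on each side is an endpoint of $e_f$. No deeper structural property of \mc\ graphs is needed beyond the defining one.
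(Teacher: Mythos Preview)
Your proof is correct and is precisely the natural argument the paper has in mind; the paper itself omits the proof entirely (note the \qed\ immediately following the statement), merely declaring the fact ``easily proved.'' Your gluing of compatible perfect matchings across the splicing cut, together with the three-case admissibility check, is exactly the standard verification; the only slightly glib step is the one-line connectivity claim, which is justified by observing that a \mcg\ of order at least four is $2$-connected (so $G_i - v_i$ is connected), while the order-two case is trivial.
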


Figure~\ref{fig:nonbip} shows a few examples that are obtained by splicing smaller {\mcg}s;
the splicing cut is depicted using a thick line.
(In all of these examples, the choice of the splicing vertices and the choice of the permutation
does not matter; it is for this reason that we have simplified the notation.) However, in general,
these choices do matter. For instance, splicing two copies of the wheel~$W_5$ (at their hubs)
can result in several non-isomorphic graphs
(such as the Petersen graph and
the pentagonal prism)
depending on the choice of the permutation.

\smallskip
Given any cut~$C:=\partial_G(X)$ of a graph~$G$,
we denote by~$G/X \rightarrow x$, or simply by~$G/X$, the graph obtained from~$G$
by shrinking $X$ to a single vertex~$x$ (and deleting any resulting loops);
we refer to $x$ as the {\it contraction vertex}.
The two graphs $G/X$ and $G/\overline{X}$
are called the \mbox{\it $C$-contractions of~$G$}.
(Observe that $G$ may be recovered by splicing
the $C$-contractions, at their contraction vertices, appropriately.)
The following lemma is easily proved, and it will be useful to us later.

\begin{lem}
\label{lem:preservation-of-even-cuts}
Let $C:=\partial(X)$ denote an odd cut of a connected graph~$G$ of even order,
and let $H$ denote a $C$-contraction of~$G$. Then: (i) $H$ is of even order,
(ii) if $D$ is a cut of~$H$ then $D$ is a cut of~$G$, and (iii) for any cut~$D$ of the graph~$H$,
$D$ is an even cut of~$H$ if and only if $D$ is an even cut of~$G$.~\qed
\end{lem}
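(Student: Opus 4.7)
The plan is to fix the natural identifications $V(H) = \{x\} \cup \overline{X}$, where $x$ is the contraction vertex, and $E(H) = E(G) \setminus E(G[X])$ (so the edges of $H$ are literally a subset of the edges of $G$). Part (i) then requires almost no work: since $G$ has even order and $C = \partial(X)$ is an odd cut, $|X|$ is odd; hence $|\overline{X}| = |V(G)| - |X|$ is odd, and $|V(H)| = |\overline{X}| + 1$ is even. The same conclusion of course holds for the other $C$-contraction $G/\overline{X}$.

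For parts (ii) and (iii), I would introduce a pull-back map from subsets of $V(H)$ to subsets of $V(G)$: given $Y \subseteq V(H)$, set
\[
Y' \;:=\; \begin{cases} Y & \text{if } x \notin Y, \\ (Y \setminus \{x\}) \cup X & \text{if } x \in Y. \end{cases}
\]
The key claim is that $\partial_H(Y) = \partial_G(Y')$. This is checked by splitting $E(G)$ into the three natural classes: edges inside $G[X]$ (which belong to neither cut, and in particular are not present in $H$); edges inside $G[\overline{X}]$ (whose status is the same in $H$ and in $G$ since $Y$ and $Y'$ agree on $\overline{X}$); and edges of $C$ (each such edge, with one end $u \in X$ and one end $v \in \overline{X}$, becomes the edge $xv$ in $H$, and lies in $\partial_H(Y)$ iff $v$ and $x$ lie on opposite sides of $Y$ iff $v$ and $X$ lie on opposite sides of $Y'$). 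This gives (ii) at once.

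For (iii), I would compare $|Y|$ and $|Y'|$. If $x \notin Y$, then $|Y'| = |Y|$. If $x \in Y$, then $|Y'| = |Y| - 1 + |X|$, and since $|X|$ is odd, $|Y|$ and $|Y'|$ still have the same parity. Thus $D = \partial_H(Y)$ is an even cut of $H$ iff $|Y|$ is even iff $|Y'|$ is even iff $D = \partial_G(Y')$ is an even cut of $G$.

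The only real obstacle is organizational: keeping the two cases for the position of $x$ relative to $Y$ straight, and doing the three-way edge bookkeeping cleanly. Once the pull-back $Y \mapsto Y'$ is set up, both the cut-preservation statement and the parity statement drop out immediately, and no deeper fact about matching coverability or tight cuts is needed.
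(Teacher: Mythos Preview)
Your proof is correct and is exactly the natural argument; the paper itself omits the proof entirely (the lemma is stated with a terminal \qed\ and the preceding sentence reads ``The following lemma is easily proved''), so there is nothing to compare against beyond noting that your pull-back map $Y \mapsto Y'$ and the three-way edge split are precisely what ``easily proved'' is gesturing at.
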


\smallskip
A cut~$C$ of a \mcg~$G$ is a {\it separating cut}
if both $C$-contractions of~$G$ are also \mc.
Consequently, $G$ has a nontrivial separating cut $C$ if and only if
$G$ can be obtained by splicing two smaller {\mcg}s (and the corresponding splicing cut
is precisely the separating cut~$C$).
The following is easy to see.

\begin{lem}
\label{lem:sep-cut-contraction-pm-extension}
Let $C$ denote a separating cut of a \mcg~$G$,
and let $G_1$ denote a $C$-contraction of~$G$.
Then each \pema\ of~$G_1$ extends to a \pema\ of~$G$.~\qed
\end{lem}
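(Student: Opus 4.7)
The plan is straightforward: recover the `missing half' of the matching from the other $C$-contraction. Write $G_1 = G/\overline{X} \rightarrow x$ for the contraction provided, and let $G_2 := G/X \rightarrow y$ denote the other $C$-contraction; since $C$ is a separating cut, both $G_1$ and $G_2$ are \mc. Given a \pema~$M_1$ of~$G_1$, the contraction vertex~$x$ is covered by a unique edge $e \in M_1$, and because every edge of~$G_1$ incident with~$x$ corresponds under contraction to an edge of~$C$ in~$G$, we may view $e$ as an edge $uv$ of~$G$ with $v \in X$ and $u \in \overline{X}$. The remaining edges of~$M_1$ lie wholly in~$G[X]$ and, together with~$e$, cover every vertex of~$X$.

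To cover the vertices in~$\overline{X}$, I would exploit the fact that~$G_2$ is \mc: viewing~$e$ as the edge~$yu$ of~$G_2$, admissibility furnishes a \pema~$M_2$ of~$G_2$ containing~$e$. The edges of~$M_2$ other than~$e$ lie in~$G[\overline{X}]$ and cover~$\overline{X} \setminus \{u\}$. Setting $M := M_1 \cup M_2$ (regarded as edge sets of~$G$, with~$e$ counted once) then produces a \pema\ of~$G$ that extends~$M_1$. There is no genuine obstacle here; the only point worth being careful about is identifying the unique edge of~$M_1$ through the contraction vertex with a concrete edge of~$C$ in~$G$, after which the separating-cut hypothesis immediately supplies a \pema\ of the opposite contraction through that same edge.
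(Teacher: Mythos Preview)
Your argument is correct and is precisely the standard one: the paper states this lemma without proof (marking it ``easy to see''), and the intended reasoning is exactly what you wrote---use the unique cut-edge of~$M_1$ and the fact that the other $C$-contraction is \mc\ to complete the matching on~$\overline{X}$.
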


Note that if $C:=\partial(X)$ is a separating cut of a \mcg~$G$ then
each of the induced subgraphs $G[X]$~and~$G[\overline{X}]$ is connected.
The following characterization of separating cuts is easily proved;
see \cite[Lemma 2.19]{clm02}.

\begin{lem}
\label{lem:sep-cut-characterization}
A cut~$C$ of a \mcg~$G$ is a separating cut if and only if, for each $e \in E(G)$,
there exists a \pema~$M$ such that $e \in M$ and $|M \cap C|=1$. \qed
\end{lem}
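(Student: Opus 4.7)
The plan is to prove the two implications separately, writing $C = \partial(X)$ and denoting the two $C$-contractions by $H_X := G/\overline{X} \to y$ and $H_{\overline{X}} := G/X \to x$. Both of these graphs are connected (because $G$ is), and any perfect matching $M$ of $G$ satisfies $|M \cap C| \equiv |X| \pmod 2$, so the hypothesis $|M \cap C|=1$ (which arises in both directions) forces $|X|$ to be odd and both $C$-contractions to have even order. With this in hand, the work reduces to moving perfect matchings between $G$ and its two contractions.

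For the forward direction, assume $C$ is separating, so $H_X$ and $H_{\overline{X}}$ are both \mc. Given $e \in E(G)$, the edge $e$ belongs to $E(G[X])$, $E(G[\overline{X}])$, or $C$, and hence is an edge of at least one of the two contractions; without loss of generality $e \in E(H_X)$. Since $H_X$ is \mc, some \pema\ $M_X$ of $H_X$ contains $e$. The contraction vertex $y$ is covered by exactly one edge of $M_X$, and that edge, viewed in $G$, lies in $C$; thus $|M_X \cap C|=1$. Applying Lemma~\ref{lem:sep-cut-contraction-pm-extension}, $M_X$ extends to a \pema\ $M$ of $G$ using only edges of $G[\overline{X}]$, so $|M \cap C| = |M_X \cap C| = 1$, as required.

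For the backward direction, assume the edge-by-edge hypothesis and let $e'$ be an arbitrary edge of $H_X$; it corresponds to an edge $e$ of $G$ in $E(G[X]) \cup C$. By hypothesis there is a \pema\ $M$ of $G$ with $e \in M$ and $M \cap C = \{f\}$ for a single edge $f$. I set $M' := (M \cap E(G[X])) \cup \{f\}$, viewing $f$ as the edge of $H_X$ incident to $y$. The edge $f$ covers $y$ and its unique endpoint in $X$; every remaining vertex of $X$ was covered by $M$ in $G$ via an edge that, by $|M \cap C| = 1$, cannot lie in $C$ and hence lies in $G[X]$. Thus $M'$ is a \pema\ of $H_X$ containing $e'$, so $H_X$ is \mc; the same argument applied to $H_{\overline{X}}$ finishes the proof.

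The only subtlety, and hence the main obstacle, is the final verification in the backward direction: one must check that discarding $M \cap E(G[\overline{X}])$ while keeping the single cut-edge still covers every vertex of $X$. This is precisely where the strict equality $|M \cap C|=1$, rather than merely $|M \cap C|$ being odd, is indispensable, and it is also what forces the contraction vertex $y$ to have a unique $M'$-partner. Everything else is routine bookkeeping about how edges of $G$ sit in $H_X$ and $H_{\overline{X}}$.
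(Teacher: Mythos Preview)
Your proof is correct. The paper does not actually supply its own argument for this lemma; it simply notes that it is ``easily proved'' and refers the reader to \cite[Lemma~2.19]{clm02}, so there is nothing to compare against beyond observing that your direct argument is exactly the standard one. One small remark: in the forward direction you invoke Lemma~\ref{lem:sep-cut-contraction-pm-extension} and then assert that the extension adds only edges of $G[\overline{X}]$; this is slightly more than that lemma literally states, but it follows at once since $M_X$ already saturates every vertex of $X$, so no added edge can meet $X$ and hence none can lie in $C$.
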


Let $C:=\partial(X)$ denote a separating cut of a \mcg~$G$ such that
the induced (connected) subgraph $G[X]$ is bipartite. Using
Lemma~\ref{lem:sep-cut-characterization}, one may
easily verify that one of the color classes of~$G[X]$ has precisely one more vertex
than the other color class; furthermore, all edges of~$C$ are incident with the bigger color
class; consequently, the $C$-contraction~$G/\overline{X}$
is a bipartite (\mc) graph. This proves the following.

\begin{lem}
\label{lem:sep-cut-contraction-bip}
For a separating cut~$C:=\partial(X)$ of a \mcg~$G$,
the \mbox{$C$-contraction} $G/\overline{X}$
is bipartite if and only if the induced subgraph~$G[X]$ is bipartite. \qed
\end{lem}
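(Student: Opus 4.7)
The plan is to prove the reverse direction by a one-line observation, then tackle the forward direction via a matching-counting argument that is essentially the sketch given in the paragraph immediately preceding the lemma.

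For the reverse direction, if $G/\overline{X}$ is bipartite, then $G[X]$ is the subgraph obtained by deleting the contraction vertex $x$ of $G/\overline{X}$, so it inherits bipartiteness. Hence the substance of the lemma lies in the forward direction.

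For the forward direction, I would fix a bipartition $(A,B)$ of $G[X]$ and let $x$ denote the contraction vertex of $G/\overline{X}$. Every neighbor of $x$ in $G/\overline{X}$ is the $X$-end of some edge of~$C$, so to conclude that $G/\overline{X}$ is bipartite it suffices to show that every edge of~$C$ has its $X$-end in the same color class of $G[X]$. First I would note that, since both $C$-contractions are \mc\ and hence of even order, $|X|$ is odd; in particular, $|A| \neq |B|$. Then, given any edge $f \in C$, I would invoke Lemma~\ref{lem:sep-cut-characterization} to produce a \pema~$M$ of~$G$ with $f \in M$ and $|M \cap C| = 1$. The subset $M \cap E(G[X])$ is then a matching of the bipartite graph~$G[X]$ which saturates every vertex of~$X$ except the $X$-end of~$f$. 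Because any matching in a bipartite graph saturates equal numbers of vertices on each side, this forces $|A|$ and $|B|$ to differ by exactly one; assume WLOG $|A|=|B|+1$, whereupon the unique uncovered vertex (the $X$-end of $f$) must lie in~$A$. Applying the same argument to every $f \in C$ places the $X$-end of every edge of $C$ in $A$, and thus $(A,\, B \cup \{x\})$ exhibits $G/\overline{X}$ as bipartite.

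The only mild subtlety is ensuring that the role of ``larger color class'' is played uniformly by the same side across all choices of $f \in C$: but this is automatic, since $|A|$ and $|B|$ are numerical invariants of the fixed bipartition of~$G[X]$, so the first application of Lemma~\ref{lem:sep-cut-characterization} already pins down which side is larger, and every subsequent application must then place its uncovered vertex on that same side. I do not anticipate any real obstacle beyond bookkeeping on parities and color-class sizes.
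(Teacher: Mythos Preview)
Your proposal is correct and follows essentially the same argument as the paper: the paragraph preceding the lemma sketches exactly your use of Lemma~\ref{lem:sep-cut-characterization} to show that one color class of $G[X]$ exceeds the other by one and that every edge of~$C$ meets the larger class, whence the contraction vertex can be placed in the smaller class. The only quibble is terminological---you have swapped the labels ``forward'' and ``reverse'' relative to the order in which the biconditional is stated---but the mathematics is sound.
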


\subsection{Tight splicing, tight cuts, bricks and braces}
\label{sec:tight-cuts-bricks-braces}

A cut~$C$ of a \mcg~$G$ is a {\it tight cut} if $|M \cap C|=1$ for each \pema~$M$
of~$G$. It follows from Lemma~\ref{lem:sep-cut-characterization}
that each tight cut is a separating cut.
For instance, if $B$ is a barrier of a \mcg~$G$,
and if~$K$ is a component of~$G-B$,
then~$\partial(V(K))$ is a tight cut of~$G$;
such a tight cut is referred to as a {\it barrier cut associated with the barrier~$B$},
or simply a {\it barrier cut}.

\smallskip
If a \mcg~$G$ is obtained by splicing two smaller {\mcg}s, say~$G_1$~and~$G_2$,
and if the corresponding
splicing cut (i.e., separating cut) is a tight cut (in~$G$) then we also say
that $G$ is obtained by {\it tight splicing $G_1$ and $G_2$}.

\smallskip
In the case of bipartite graphs, one may prove using
Lemmas~\ref{lem:sep-cut-characterization}~and~\ref{lem:sep-cut-contraction-bip}
that each separating cut is indeed a tight cut;
furthermore, one may infer that each tight cut is a barrier cut; see Figure~\ref{fig:bip-tight-cut}.
However, in general, a separating cut need not be a tight cut.
For instance, the separating cuts, depicted by thick lines, in
Figures~\ref{fig:C6bar}~and~\ref{fig:Bicorn}, are not tight cuts.
On the other hand, the separating cut shown in Figure~\ref{fig:K4-splice-K33} is a barrier cut,
and the corresponding splicing is a tight splicing.

\smallskip
For tight cuts, we have the following stronger conclusion
(in comparison to Lemma~\ref{lem:sep-cut-contraction-pm-extension}).

\begin{lem}
\label{lem:tight-cut-contraction-pm-restriction}
Let $C$ denote a tight cut of a \mcg~$G$, and let $G_1$ denote a $C$-contraction of~$G$.
Then each \pema\ of~$G_1$ extends to a \pema\ of~$G$; furthermore, the restriction of each
\pema\ of~$G$ to the set~$E(G_1)$ is a \pema\ of~$G_1$. \qed
\end{lem}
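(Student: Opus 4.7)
The plan is to dispatch the two halves of the statement separately, with essentially all of the work concentrated on the restriction half. For the extension half, I would simply invoke the fact, noted right after Lemma~\ref{lem:sep-cut-characterization}, that every tight cut is a separating cut; Lemma~\ref{lem:sep-cut-contraction-pm-extension} then supplies the required extension of any \pema\ of~$G_1$ to a \pema\ of~$G$, and no additional argument is needed there.

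For the restriction half, I would write $C = \partial_G(X)$ and, without loss of generality, take $G_1 := G/\overline{X}$ with contraction vertex~$\overline{x}$, so that $V(G_1) = X \cup \{\overline{x}\}$ and $E(G_1) = E(G[X]) \cup C$ (each edge of~$C$ being reattached to~$\overline{x}$). Given any \pema~$M$ of~$G$, the crucial ingredient is tightness, which yields $|M \cap C| = 1$; denote this unique edge by~$e$ and its end in~$X$ by~$u$. Then $M \cap E(G_1) = (M \cap E(G[X])) \cup \{e\}$, and it remains to verify that this edge set saturates every vertex of~$G_1$ exactly once.

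The vertices~$\overline{x}$ and~$u$ are both saturated by~$e$; whereas for any other vertex $v \in X$, the $M$-edge through~$v$ cannot lie in~$C$ (since $e$ is the only such $M$-edge crossing~$C$) and therefore must have both ends in~$X$, contributing to $M \cap E(G[X])$. Consequently, $M \cap E(G[X])$ is a perfect matching of $G[X] - u$, and adjoining~$e$ yields a \pema\ of~$G_1$.

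I do not anticipate any genuine obstacle in either direction. The entire argument hinges on the observation that tightness forces \emph{exactly} one (rather than \emph{at least} one) edge of any \pema\ to lie in~$C$, after which the vertex-counting bookkeeping on both sides of the contraction is essentially immediate; the only mild subtlety is keeping track of which end of the unique crossing edge~$e$ serves as the partner of the contraction vertex in~$G_1$.
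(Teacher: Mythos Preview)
Your proposal is correct. The paper does not actually supply a proof of this lemma --- it is stated with a terminal \qed\ and left to the reader --- so your write-up is precisely the kind of standard verification the authors are implicitly assuming: invoke Lemma~\ref{lem:sep-cut-contraction-pm-extension} (via the remark that every tight cut is separating) for the extension half, and use $|M \cap C|=1$ for the restriction half.
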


A \mcg~$G$ that is devoid of nontrivial tight cuts is called a {\it brick} if it is nonbipartite,
or a {\it brace} if it is bipartite.
A brick is {\it solid} if it is devoid of nontrivial separating cuts. Thus,
bricks and braces are precisely those {\mcg}s that are free of nontrivial tight cuts;
whereas braces and solid bricks are precisely those {\mcg}s that are free of nontrivial
separating cuts.

\smallskip
The smallest simple bipartite \mcg{s} are the braces $K_2$ and $C_4$,
whereas the smallest nonbipartite ones are the solid brick~$K_4$
and the nonsolid brick~$\overline{C_6}$.
Every \mcg\ of order at least four is $2$-connected. It is easy to see that if a \mcg\
of order six or more has a $2$-vertex-cut then it has a nontrivial tight cut.
This proves the following fact.

\begin{prp}
\label{prp:bricks-braces-3conn}
Every brace of order six or more, and every brick, is $3$-connected. \qed
\end{prp}

Using the fact that, in a bipartite \mcg, every tight cut is a barrier cut
(see Figure~\ref{fig:bip-tight-cut}),
one may obtain the following characterizations of braces.

\begin{prp}
\label{prp:brace-characterization}
For a bipartite \mcg~$H[A,B]$, the following statements are equivalent:
\begin{enumerate}[(i)]
\item $H$ is a brace.
\item $|N_H(S)| \geq |S|+2$ for every nonempty subset~$S$ of~$A$ such that
$|S| < |A|-1$.
\item $H-\{a_1,a_2,b_1,b_2\}$ is matchable for any four distinct vertices $a_1,a_2 \in A$
and $b_1,b_2 \in B$.~\qed
\end{enumerate}
\end{prp}

The first statment of the next corollary follows immediately from
Corollary~\ref{cor:bip-ec} and Proposition~\ref{prp:bricks-braces-3conn},
whereas the second statement follows from
Propositions~\ref{prp:bip-mcg-characterization}~and~\ref{prp:brace-characterization}.

\begin{cor}
\label{cor:brace-removability}
For any brace~$H$, the following statements hold:
\begin{enumerate}[(i)]
\item $\varepsilon(H) \in \{1,2\}$, and $\varepsilon(H) = 2$ if and only if $H$~is~$C_4$ (up to multiple edges) and $H$ has an even $2$-cut.
\item If $|V(H)| \geq 6$ then each edge is removable. \qed
\end{enumerate}
\end{cor}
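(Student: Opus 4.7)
The plan is to handle parts (i) and (ii) separately.

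For part~(i), I would split on $|V(H)|$. If $|V(H)| \geq 6$, Proposition~\ref{prp:bricks-braces-3conn} gives $\kappa(H) \geq 3$, hence $\lambda(H) \geq 3$, so $H$ admits no $2$-cut, and Corollary~\ref{cor:bip-ec} yields $\varepsilon(H) = 1$. If $|V(H)| = 2$, then $H = K_2$, and any two distinct parallel edges are each themselves a perfect matching, so no two are mutually dependent; each \ec{} is a singleton. The only substantive case is $|V(H)| = 4$: Proposition~\ref{prp:bip-mcg-characterization} applied to each singleton $\{a\} \subseteq A$ forces $|N_H(a)| = 2$, so the underlying simple graph of~$H$ is $C_4$. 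A short enumeration of the shores~$X$ of even size shows that the only candidates for an even $2$-cut are the two ``diagonal'' pairs $\{a_1b_1, a_2b_2\}$ and $\{a_1b_2, a_2b_1\}$, each of which is an even $2$-cut precisely when its two edges are of multiplicity one. Corollary~\ref{cor:bip-mutual-dependence} identifies these as the only possible doubleton equivalence classes, yielding $\varepsilon(H) = 2$ if and only if $H$ has an even $2$-cut. To rule out $\varepsilon(H) \geq 3$, observe that any given edge lies in at most one such diagonal pair, so if $\{e,f\}$ and $\{e,g\}$ were both even $2$-cuts then $f = g$.

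For part~(ii), I fix an edge $e = a_0 b_0$ and verify the Hall-type condition of Proposition~\ref{prp:bip-mcg-characterization}(ii) for $H - e$: namely $|N_{H-e}(S)| \geq |S|+1$ for every nonempty proper $S \subseteq A$. Deletion of a single edge decreases any $|N(S)|$ by at most one, so for $0 < |S| \leq |A|-2$ the slack $|N_H(S)| \geq |S|+2$ supplied by Proposition~\ref{prp:brace-characterization}(ii) suffices. The delicate case is $|S| = |A|-1$: here the brace condition is silent, and the matching-covered condition only gives $|N_H(S)| = |A|$. I would argue that no vertex of $B$ loses all its neighbors in~$S$ upon deleting~$e$. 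The only candidate is $b_0$, and only when $a_0 \in S$; but applying Proposition~\ref{prp:brace-characterization}(ii) to $\{b_0\} \subseteq B$ (valid since $|B| \geq 3$) yields $|N_H(b_0)| \geq 3$, and since $A \setminus S$ contains just one vertex, $b_0$ retains at least one neighbor in~$S$ distinct from~$a_0$. A brief separate check (matchability via $|N_{H-e}(A)| = |B|$, which follows from $|N_H(b_0)| \geq 3$; connectedness because a disconnection would violate the Hall-type bound) completes the verification that $H - e$ is \mc.

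The main obstacle is the boundary case $|S| = |A|-1$ in~(ii): the slack from Proposition~\ref{prp:brace-characterization}(ii) is no longer available for such~$S$, and one must switch sides and apply the brace condition to a singleton $\{b_0\} \subseteq B$ instead. Part~(i) is essentially bookkeeping once the structure of~$H$ in the case $|V(H)| = 4$ has been pinned down.
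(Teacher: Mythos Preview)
Your proposal is correct and follows essentially the same approach as the paper: the paper simply records that part~(i) follows from Corollary~\ref{cor:bip-ec} and Proposition~\ref{prp:bricks-braces-3conn}, and that part~(ii) follows from Propositions~\ref{prp:bip-mcg-characterization} and~\ref{prp:brace-characterization}, and you have spelled out precisely how these ingredients combine. Your handling of the boundary case $|S|=|A|-1$ in~(ii) by switching sides and applying the brace condition to the singleton~$\{b_0\}$ is exactly the natural way to complete the argument the paper leaves implicit.
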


Lov{\'a}sz~\cite{lova87} proved that if $e$ and $f$ are distinct mutually dependent edges
of a brick~$G$, then $H:=G-e-f$ is a (connected) bipartite matchable graph,
both ends of~$e$ lie in one color class of~$H$ and both ends of $f$ lie in the other color class;
consequently, $\{e,f\}$ is the complement of a cut of~$G$.
(The reader may verify this for each of the doubleton equivalence classes shown in
Figure~\ref{fig:nonbip}.)
Using this fact and Proposition~\ref{prp:bricks-braces-3conn}, one may easily deduce
the following; see\cite[Lemma~2.4]{clm99}.

\begin{cor}
\label{cor:brick-ec}
In any brick~$G$, each \ec\ is either a singleton or a doubleton.
Consequently, $\varepsilon(G) \in \{1,2\}$. \qed
\end{cor}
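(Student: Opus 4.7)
The plan is to proceed by contradiction: assume some equivalence class $F$ of a brick~$G$ contains three distinct edges $e,f,g$. The main tool is the Lov\'asz result on mutually dependent edges that is recalled immediately above the corollary; I will apply it simultaneously to the pairs $(e,f)$ and $(e,g)$, intersect the two bipartitions it produces, and exploit the $3$-connectivity of~$G$ supplied by Proposition~\ref{prp:bricks-braces-3conn} to reach a contradiction. The ``consequently $\varepsilon(G)\in\{1,2\}$'' part of the statement then follows at once.

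Applying Lov\'asz's theorem to the mutually dependent pair $(e,f)$ yields a bipartition $(A_1,B_1)$ of $V(G)$ with $E(G)\setminus\{e,f\}=\partial_G(A_1)$, both ends of $e$ in~$A_1$, and both ends of $f$ in~$B_1$. Applying it to $(e,g)$ (and swapping color classes if necessary so that $e$ still lies on the ``$A$''-side) yields a second bipartition $(A_2,B_2)$ with both ends of $e$ in~$A_2$ and both ends of $g$ in~$B_2$. Refine these into the four parts $V_1:=A_1\cap A_2$, $V_2:=A_1\cap B_2$, $V_3:=B_1\cap A_2$, $V_4:=B_1\cap B_2$. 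A routine case check on which pairs $(V_i,V_j)$ simultaneously cross both bipartitions shows that every edge of $G$ falls into exactly one of the following categories: $e$ lies inside $V_1$; $f$ joins $V_3$ to $V_4$; $g$ joins $V_2$ to $V_4$; and every other edge joins $V_1$ to $V_4$ or $V_2$ to $V_3$. In particular $\partial_G(V_1\cup V_4)=\{f,g\}$. Because the two bipartitions cannot coincide (otherwise $g$ would have both ends on one side of $(A_1,B_1)$, contradicting that $g\in E(G-e-f)$), all four parts are nonempty, and $|V_1|\geq 2$ since $e$ is not a loop.

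Now let $u',v'\in V_4$ be the endpoints of $f$ and $g$ lying in $V_1\cup V_4$. If $u'=v'$, then deleting the single vertex $u'$ separates $V_1$ from the nonempty set $V_2\cup V_3$; if $u'\neq v'$, then deleting $\{u',v'\}$ accomplishes the same. Either way we obtain a vertex cut of size at most two in~$G$, contradicting Proposition~\ref{prp:bricks-braces-3conn}. Hence $|F|\le 2$, proving the corollary. The one step I expect to need genuine care is the four-part bookkeeping: verifying that the two bipartitions are genuinely distinct (so that $V_2,V_3\neq\emptyset$), that $f$ and $g$ really do land with one end on each side of the split $(V_1\cup V_4,V_2\cup V_3)$, and that no stray edge of $G$ sneaks into a forbidden pair $(V_i,V_j)$. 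Once those checks are in place, the $3$-connectivity contradiction is immediate.
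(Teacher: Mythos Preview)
Your argument is correct and follows the same route the paper sketches: apply Lov\'asz's structural result on mutually dependent pairs and then invoke $3$-connectivity (Proposition~\ref{prp:bricks-braces-3conn}); the paper does not spell out the details (it refers to~\cite{clm99}), and your four-part refinement of the two bipartitions is exactly the natural way to fill them in. Two minor remarks: the sentence about the bipartitions ``not coinciding'' is redundant, since you already placed one end of~$f$ in~$V_3$ and one end of~$g$ in~$V_2$, which by itself forces all four parts to be nonempty; and once you have $\partial_G(V_1\cup V_4)=\{f,g\}$ with both shores nonempty, you could simply cite the standard inequality $\kappa(G)\le\kappa'(G)$ to finish, rather than extracting a vertex cut by hand.
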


Unlike the case of braces, the existence (and distribution)
of removable edges is much harder to explain in the case of bricks.
It was shown by Lov{\'a}sz~\cite{lopl86} that every brick,
distinct from~$K_4$ and the triangular prism~$\overline{C_6}$, has a removable edge;
the brick shown in Figure~\ref{fig:Bicorn} has a unique removable edge.

\subsection{Tight cut decomposition, and the invariants $b(G)$ and $c_4(G)$}
\label{sec:tight-cut-separating-cut-decompositions}

Now, let $G$ denote any \mcg. We may apply to~$G$ a recursive procedure, called the
{\it \tcd\ procedure}, to output a list of bricks and braces.
If $G$ is free of nontrivial tight cuts, then this list comprises $G$.
Otherwise, we choose a nontrivial tight cut~$C$ and obtain the two
$C$-contractions (of~$G$), say $G_1$ and $G_2$; these
are smaller {\mcg}s; we now apply the \tcd\ procedure recursively
to each of~$G_1$ and $G_2$, and then combine the resulting output lists into
a single list --- that is the output of an application of the \tcd\ procedure to~$G$.

\smallskip
Lov{\'a}sz~\cite{lova87}
proved the following remarkable property of the \tcd\ procedure,
and used it as one of several ingredients in his characterization of the matching lattice.

\begin{thm}
\label{thm:uniqueness-tight-cut-decomposition}
Any two applications of the \tcd\ procedure to a \mcg\ yield
the same list of bricks and braces (up to multiple edges).
\end{thm}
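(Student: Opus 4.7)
The plan is to prove Theorem~\ref{thm:uniqueness-tight-cut-decomposition} by induction on~$|V(G)|$. The base case is the one in which $G$ has no nontrivial tight cut (so $G$ is itself a brick or brace); here every application of the procedure outputs the one-element list~$(G)$, and there is nothing to verify. For the inductive step, I would consider two applications that begin by selecting, respectively, nontrivial tight cuts $C_1 = \partial(X_1)$ and $C_2 = \partial(X_2)$; it suffices to show that the final lists produced agree (up to multiplicities), after which the inductive hypothesis applied to the four smaller {\mcg}s arising from the two choices closes the argument.

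The technical heart of the proof is an \emph{uncrossing} lemma for tight cuts. Two nontrivial tight cuts $\partial(X_1)$, $\partial(X_2)$ are said to \emph{cross} if each of $X_1 \cap X_2$, $X_1 \setminus X_2$, $X_2 \setminus X_1$, $\overline{X_1 \cup X_2}$ is nonempty. For the non-crossing case one verifies directly that the two cuts are laminar, and (after contracting by $C_1$) $C_2$ remains a tight cut of exactly one $C_1$-contraction via Lemma~\ref{lem:tight-cut-contraction-pm-restriction}; applying the induction hypothesis to the smaller pieces, both orderings produce the same final list. For the crossing case, I would use the identity
\begin{equation*}
|M \cap \partial(X)| + |M \cap \partial(Y)| = |M \cap \partial(X \cap Y)| + |M \cap \partial(X \cup Y)| + 2\,|M \cap E(X \setminus Y,\, Y \setminus X)|,
\end{equation*}
valid for every edge subset~$M$ and every $X, Y \subseteq V(G)$. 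Applying this with $(X,Y) := (X_1, X_2)$ when $|X_1 \cap X_2|$ is odd (equivalently, $|X_1 \cup X_2|$ is odd, since $|X_1|$ and $|X_2|$ are odd), and with $(X,Y) := (X_1, \overline{X_2})$ when $|X_1 \cap X_2|$ is even (so that $|X_1 \setminus X_2|$ and $|X_2 \setminus X_1|$ are odd), one of the two pairs $\{\partial(X_1 \cap X_2), \partial(X_1 \cup X_2)\}$ or $\{\partial(X_1 \setminus X_2), \partial(X_2 \setminus X_1)\}$ consists entirely of odd cuts of~$G$. Since odd cuts meet every \pema\ in at least one edge, the identity (together with $|M\cap C_1|=|M\cap C_2|=1$ for every \pema~$M$) forces both new cuts to be tight. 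Moreover both new cuts are nontrivial, as $X_1 \cap X_2$ and $X_1 \cup X_2$ (respectively $X_1 \setminus X_2$ and $X_2 \setminus X_1$) are proper subsets of $X_1$ strictly containing or contained in shores of~$C_1$.

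With uncrossing in hand, I would finish the crossing case by a standard transitivity: the uncrossed cut $C'$ is laminar with~$C_1$ (its shore $X_1 \cap X_2$ or $X_1 \setminus X_2$ is a proper subset of~$X_1$), and a symmetric statement holds for~$C_2$. By the non-crossing case already handled, the decomposition starting with~$C_1$ agrees with the one starting with~$C'$, which in turn agrees with the one starting with~$C_2$. Putting the laminar case and the uncrossing together closes the induction.

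The main obstacle is the parity bookkeeping in the uncrossing step: one must carefully justify the dichotomy between intersection/union and set-difference uncrossings based on the parity of $|X_1 \cap X_2|$, and in each case verify that the two new tight cuts are indeed nontrivial in~$G$. A secondary, more delicate, point is the phrase ``up to multiple edges'' in the statement: when $C_1$ and $C_2$ are laminar and we contract in the two possible orders, the small ``innermost'' piece is the same induced subgraph of~$G$, but the shrunk-vertex in the two remaining pieces may acquire a different multiplicity of incident edges depending on the order; one needs to check that these multiplicities arrange themselves consistently so that the output lists match \emph{up to multiplicities} of parallel edges — which is precisely the reason the uniqueness is only claimed modulo multiple edges.
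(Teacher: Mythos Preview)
The paper does not prove Theorem~\ref{thm:uniqueness-tight-cut-decomposition}; it merely attributes the result to Lov\'asz~\cite{lova87} and states it without proof, as background for defining the invariants $b(G)$ and $c_4(G)$. There is therefore no proof in the paper against which to compare your proposal.

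That said, your sketch does follow the standard uncrossing strategy that underlies Lov\'asz's original argument, so in spirit you are on the right track. A few cautions if you intend to turn this into a full proof. First, the assertion that both uncrossed cuts are \emph{nontrivial} is not automatic: $X_1\cap X_2$ (or $X_1\setminus X_2$) may well be a singleton, and your inductive bookkeeping must accommodate that possibility rather than exclude it. Second, the ``transitivity'' step at the end is more delicate than you indicate: to invoke the laminar case for the pair $(C_1,C')$ and then for $(C',C_2)$ you implicitly need the induction hypothesis to apply not just to the $C$-contractions but to any two applications of the procedure on~$G$ itself that start with laminar first cuts --- so the induction must be set up carefully (typically on $|V(G)|$ together with a secondary measure of how badly the two chosen cuts cross). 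Third, Lov\'asz's actual proof passes through a structural classification of tight cuts (every nontrivial tight cut is a barrier cut or a $2$-separation cut) and uncrosses these types against one another in a controlled way; the purely arithmetic submodularity identity you wrote down is the engine, but the complete argument is substantially longer than your outline suggests.
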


In light of this, we may now define a couple of useful invariants of a \mcg~$G$.
As usual, we let $b(G)$ denote the number of bricks
yielded by any \tcd\ of~$G$.
(This invariant plays a key role in the theory of {\mcg}s; for instance, it appears
in the formula for the dimension of the \pema\ polytope.)
The following is an immediate consequence of Lemma~\ref{lem:sep-cut-contraction-bip}.

\begin{prp}
\label{prp:bip-iff-zero-bricks}
Let $G$ denote a \mcg.
Given any tight cut~$C$ of~$G$,
the graph~$G$ is bipartite if and only if both $C$-contractions of~$G$ are bipartite.
Consequently, $G$~is bipartite if and only if $b(G)=0$. \qed
\end{prp}

Now, we prove an easy lemma that will be useful to us in Section~\ref{sec:upper-bounds}.
\begin{lem}
\label{lem:tight-cut-both-contractions-nonbipartite-or-one-brace}
Let $G$ be a \mcg\ that is neither a brick nor a brace. Then:
\begin{enumerate}[(i)]
\item either $G$ has a nontrivial tight cut~$C$ such that both \mbox{$C$-contractions}
are nonbipartite,
\item or $G$ has a nontrivial tight cut~$D$ such that one of the \mbox{$D$-contractions} is a brace.
\end{enumerate}
\end{lem}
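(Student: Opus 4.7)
The plan is to show that if case~(i) fails then case~(ii) must hold. Assume, therefore, that every nontrivial tight cut of~$G$ has at least one bipartite contraction. Since $G$ is neither a brick nor a brace, it admits some nontrivial tight cut $\partial_G(X)$; by Lemma~\ref{lem:sep-cut-contraction-bip} combined with our standing assumption, after possibly swapping $X$ with~$\overline{X}$ I may assume that $G[X]$ is bipartite (so $G/\overline{X}$ is bipartite as well). Let $\mathcal{F}$ denote the family of subsets $Y \subseteq X$ with $|Y| \geq 2$ for which $\partial_G(Y)$ is a nontrivial tight cut of~$G$. Then $X \in \mathcal{F}$, so I may choose $Y \in \mathcal{F}$ of minimum cardinality and set $D := \partial_G(Y)$.

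It remains to argue that $G/\overline{Y}$ is a brace. Since $G[Y]$ is an induced subgraph of the bipartite graph~$G[X]$, and $D$ is a tight cut (hence separating), Lemma~\ref{lem:sep-cut-contraction-bip} gives that $G/\overline{Y}$ is bipartite; moreover $G/\overline{Y}$ is matching covered because $D$ is separating. Suppose, toward a contradiction, that $G/\overline{Y}$ possesses a nontrivial tight cut $\partial_{G/\overline{Y}}(W)$; by replacing $W$ with its complement if necessary, I may assume the contraction vertex does not lie in~$W$, whence $W \subsetneq Y$ and $2 \leq |W| \leq |Y|-1$. An unpacking of the definitions identifies $\partial_{G/\overline{Y}}(W)$ with $\partial_G(W)$ as an edge set; combined with Lemma~\ref{lem:tight-cut-contraction-pm-restriction} applied to the tight cut $D$ of~$G$, this shows $\partial_G(W)$ is a nontrivial tight cut of~$G$ with $W \subsetneq Y \subseteq X$ and $|W| \geq 2$. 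This contradicts the minimality of $|Y|$ in~$\mathcal{F}$ and finishes the argument.

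The principal obstacle is the routine but not-quite-trivial bookkeeping that underlies the descent: one must verify that a tight cut of a $C$-contraction whose shore avoids the contraction vertex lifts to a tight cut of the parent graph with literally the same underlying edge set. This follows cleanly from Lemma~\ref{lem:tight-cut-contraction-pm-restriction} together with the definition of contraction, but it must be tracked carefully across two successive levels (first contracting $\overline{X}$ in~$G$, then $\overline{Y}$ in $G/\overline{X}$). The genuine substance of the proof is the minimal choice of $Y \in \mathcal{F}$, which blocks any infinite descent to ever-smaller tight cuts and forces the contraction $G/\overline{Y}$ to be brace-like, i.e., free of nontrivial tight cuts.
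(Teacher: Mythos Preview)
Your proof is correct and follows essentially the same strategy as the paper's: pick a minimal shore $Y$ on the bipartite side $X$ and use minimality to conclude that $G/\overline{Y}$ has no nontrivial tight cuts, hence is a brace. The only cosmetic difference is that the paper carries out the minimality argument inside $G_1=G/\overline{X}$ rather than directly in~$G$; in particular, your closing remark about tracking ``two successive levels'' is superfluous, since your own lift goes from $G/\overline{Y}$ straight to~$G$ in a single step via Lemma~\ref{lem:tight-cut-contraction-pm-restriction}.
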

\begin{proof}
Let $G$ be a \mcg\ that is neither a brick nor a brace;
whence $G$ has a nontrivial tight cut, say~$C:=\partial(X)$.
Let $G_1 := G/\overline{X}$ and $G_2:=G/X$ denote the two \mbox{$C$-contractions} of~$G$.
If $G_1$ and $G_2$ are both nonbipartite then there is nothing to prove.

\smallskip
Now suppose that one of $G_1$ and $G_2$ is bipartite; adjust notation so that $G_1$
is bipartite.
If $G_1$ is a brace then there is nothing to prove. Now suppose that $G_1$ is not a brace;
consequently, $G_1$ has nontrivial tight cut(s).
We may choose a shore~$Y$ of a nontrivial tight cut (in~$G_1$) so that
(i) $\overline{x} \notin Y$ and
(ii) no proper subset of~$Y$ is a shore of a nontrivial tight cut (in~$G_1$).
Thus $D:=\partial(Y)$ is a nontrivial tight cut of~$G_1$, and also of~$G$.
By Lemma~\ref{lem:sep-cut-contraction-bip}, the \mbox{$D$-contraction}
$H:=G/\overline{Y}$ is a bipartite \mcg; by our choice of~$Y$, the graph~$H$
is free of nontrivial tight cuts. Thus $H$ is a brace. This completes the proof.
\end{proof}

For reasons that will be evident later,
we let $c_4(G)$ denote the number of braces isomorphic to~$C_4$ (up to multiple edges)
yielded by any \tcd\ of~$G$.
The following proposition will also be useful to us in Section~\ref{sec:upper-bounds}.

\begin{prp}
\label{prp:even-2-cut-existence}
Let $G$ denote a \mcg\ of order four or more. Then $G$ has an even $2$-cut
if and only if the following holds: there exists an application of the \tcd\ procedure to~$G$
that yields a brace $J$ that is $C_4$ (up to multiple edges) and has an even $2$-cut.
Consequently, if $G$ is free of even $2$-cuts then $c_4(G)=0$.
\end{prp}
\begin{proof}
Let $G$ denote a \mcg\ of order four or more.

\smallskip
First suppose that $G$ has an even $2$-cut, say
$\partial(X):=\{e,f\}$.
Since $G$ is $2$-connected,
$e$~and~$f$ are nonadjacent; furthermore, $e \xleftrightarrow{G} f$.
We let $e:=v\overline{v}$ so that $v \in X$ and $\overline{v} \in \overline{X}$.
Observe that $C:=\partial(X-v)$ and $D:=\partial(\overline{X} - \overline{v})$
are laminar tight cuts of~$G$. Let $J$ denote the graph obtained by first contracting the
shore~$X-v$ (of~$C$) and then contracting the shore~$\overline{X}-\overline{v}$ (of~$D$);
that is, $J:= (G/(X-v))/(\overline{X}-\overline{v})$. Observe that $J$ is indeed the brace~$C_4$
and that $\{e,f\}$ is an even $2$-cut of~$J$. This proves the forward implication.

\smallskip
Now suppose that there exists an application of the tight cut decomposition procedure to~$G$
that yields a brace~$J$ that is $C_4$ (up to multiple edges) and has an even $2$-cut, say~$F$.
By repeatedly invoking Lemma~\ref{lem:preservation-of-even-cuts},
we infer that $F$ is an even $2$-cut of~$G$ as well.
This completes the proof of Proposition~\ref{prp:even-2-cut-existence}.
\end{proof}

\smallskip
One may conveniently define the less well-known {\it \scd\ procedure} by
replacing each occurrence of `tight cut' by `separating cut' in the first paragraph of
this section~(\ref{sec:tight-cut-separating-cut-decompositions}).
It follows from the discussion in Section~\ref{sec:tight-cuts-bricks-braces}
that the output of any application
of the \scd\ procedure (to a \mcg) is a list of braces and
solid bricks. However, unlike the \tcd\ procedure,
two distinct applications of the \scd\ procedure to a matching covered graph
need not yield the same list of braces and solid bricks; in fact, they may even
yield lists of different cardinalities.

\subsection{Our results}

This article is inspired by the following basic question.

\begin{que}
\label{que:main}
If a \mcg~$G$ is obtained by splicing (or by tight splicing)
two smaller {\mcg}s, say~$G_1$~and~$G_2$,
then how is $\mathcal{E}_G$ related to~$\mathcal{E}_{G_1}$ and to~$\mathcal{E}_{G_2}$
(and vice versa)?
\end{que}

In Section~\ref{sec:ec-and-sep-cuts}, we answer the above question with respect to
the splicing operation (or equivalently, with respect to separating cuts).
In Section~\ref{sec:ec-and-tight-cuts}, we answer the above question with respect to
the tight splicing operation
(or equivalently, with respect to tight cuts).
In Section~\ref{sec:applications}, we provide two applications of our findings.

\smallskip
In Section~\ref{sec:upper-bounds}, we establish tight upper bounds on $\varepsilon(G)$;
in particular, we prove that for any \mcg~$G$:
if $G$ is bipartite then $\varepsilon(G) \leq 1 + c_4(G)$;
whereas if $G$ is nonbipartite then $\varepsilon(G) \leq 2 \cdot b(G) + c_4(G)$.

\smallskip
In Section~\ref{sec:arbitrarily-high-kappa-and-varepsilon}, we describe a procedure
to construct {\mcg}s that have arbitrarily high $\kappa(G)$ and $\varepsilon(G)$
simultaneously (where $\kappa(G)$ denotes the vertex-connectivity of~$G$).
This affirmatively answers a recent question of
He, Wei, Ye and Zhai~\cite{hwyz19}.

\section{Equivalence classes and separating cuts}
\label{sec:ec-and-sep-cuts}

Throughout this section, we let $C:=\partial(X)$ denote a nontrivial
separating cut of a \mcg~$G$, and we let $G_1:=G/ \overline{X} \rightarrow \overline{x}$
and $G_2:=G/X \rightarrow x$ denote its $C$-contractions.
Equivalently, $G$ is obtained by splicing the two smaller {\mcg}s $G_1$~and~$G_2$.
The following is an immediate consequence of Lemma~\ref{lem:sep-cut-characterization}.

\begin{lem}
\label{lem:sep-cut-ec-intersection}
If $e \xrightarrow{G} f$ then $| \{e,f\} \cap C | \in \{0,1\}$.
Consequently, each \ec\ of~$G$ meets $C$ in at most one edge. \qed
\end{lem}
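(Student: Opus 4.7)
The plan is to argue the first assertion by contradiction, exploiting the characterization of separating cuts given in Lemma~\ref{lem:sep-cut-characterization}, and then deduce the ``consequently'' part directly from the definition of mutual dependence.

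First, suppose for contradiction that $e \xrightarrow{G} f$ and yet $|\{e,f\} \cap C|=2$, that is, both $e$~and~$f$ lie in~$C$. Since $C$ is a separating cut of~$G$, Lemma~\ref{lem:sep-cut-characterization} applied to the edge~$e$ guarantees the existence of a \pema~$M$ of~$G$ such that $e \in M$ and $|M \cap C|=1$. In particular, $M \cap C = \{e\}$, so $f \notin M$. On the other hand, by the hypothesis $e \xrightarrow{G} f$, every \pema\ containing~$e$ must also contain~$f$; applying this to~$M$ gives $f \in M$, the desired contradiction. Hence $|\{e,f\} \cap C| \leq 1$, which is exactly $|\{e,f\} \cap C| \in \{0,1\}$.

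For the ``consequently'' part, suppose $F$ is an \ec\ of~$G$ and $e,f \in F$ are distinct. By the definition of mutual dependence, $e \xleftrightarrow{G} f$; in particular, $e \xrightarrow{G} f$. By the first assertion just proved, $e$~and~$f$ cannot both belong to~$C$. Thus no two distinct edges of~$F$ lie in~$C$, i.e., $|F \cap C| \leq 1$.

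There is no real obstacle here; the argument is a two-line application of Lemma~\ref{lem:sep-cut-characterization}. The only thing to be careful about is to apply the characterization to the correct edge (namely~$e$, the one whose presence forces~$f$), so that the $C$-edge singled out inside the chosen \pema\ is forced to be~$e$ itself, making the absence of~$f$ immediate.
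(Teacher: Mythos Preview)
Your proof is correct and is exactly the argument the paper has in mind: the lemma is stated there as an immediate consequence of Lemma~\ref{lem:sep-cut-characterization}, and you have spelled out that consequence in the natural way. There is nothing to add.
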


By Lemma~\ref{lem:sep-cut-contraction-pm-extension}, for $i \in \{1,2\}$, each \pema\ of~$G_i$
may be extended to a \pema\ of~$G$;
equivalently, each \pema\ of~$G_i$ may be viewed as a restriction of some \pema\
of~$G$ to the set~$E(G_i)$.
Using this, one may infer the following.

\begin{lem}
\label{lem:dependence-inheritance-in-contraction}
For $i \in \{1,2\}$, for any two edges $e,f \in E(G_i)$: if $e \xrightarrow{G} f$ then
$e \xrightarrow{G_i} f$. \qed
\end{lem}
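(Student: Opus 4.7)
The plan is to unfold the definition of $\xrightarrow{G_i}$ directly, using the extension property provided by Lemma~\ref{lem:sep-cut-contraction-pm-extension}. Fix $i \in \{1,2\}$ and edges $e, f \in E(G_i)$ with $e \xrightarrow{G} f$. To establish $e \xrightarrow{G_i} f$, I would pick an arbitrary perfect matching $M_i$ of~$G_i$ that contains~$e$, and argue that $f \in M_i$.

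First I would invoke Lemma~\ref{lem:sep-cut-contraction-pm-extension} to extend $M_i$ to a perfect matching~$M$ of~$G$. The next, and really only, point to verify is that $M \cap E(G_i) = M_i$. Here I would use the structure of the extension: $M_i$ saturates the contraction vertex of~$G_i$ by exactly one edge~$c \in C$, and completing $M_i$ to~$M$ amounts to choosing a perfect matching of the induced subgraph on the opposite shore of~$C$, with the endpoint of~$c$ on that shore deleted. Every edge thus added lies strictly inside the opposite shore and so belongs to $E(G) \setminus E(G_i)$. Consequently the restriction $M \cap E(G_i)$ is exactly~$M_i$.

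With this in hand, $e \in M_i \subseteq M$ together with $e \xrightarrow{G} f$ forces $f \in M$; and since $f \in E(G_i)$, it follows that $f \in M \cap E(G_i) = M_i$. As $M_i$ was arbitrary, this proves $e \xrightarrow{G_i} f$. The argument is symmetric in~$i$, so the case $i=2$ is identical with the roles of $X$ and~$\overline{X}$ swapped.

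There is no real obstacle here; the statement is essentially a direct translation of the definition once one has the extension lemma. The only mildly delicate point is the bookkeeping between the edge sets $E(G)$, $E(G_1)$ and $E(G_2)$ (which overlap exactly on~$C$), and observing that the freshly added part of the extension lives entirely inside one shore and hence avoids $E(G_i)$ altogether.
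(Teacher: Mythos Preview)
Your proof is correct and follows the same approach as the paper: the paper simply remarks that, by Lemma~\ref{lem:sep-cut-contraction-pm-extension}, every perfect matching of~$G_i$ is the restriction to~$E(G_i)$ of some perfect matching of~$G$, and declares the lemma immediate from this. You have spelled out exactly these details, including the key observation that any extension~$M \supseteq M_i$ satisfies $M \cap E(G_i) = M_i$ because every edge of~$E(G_i)$ is incident with the shore already saturated by~$M_i$.
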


Lemma~\ref{lem:dependence-inheritance-in-contraction}
implies the first part of the following proposition,
whereas Lemma~\ref{lem:sep-cut-ec-intersection} implies the second part.

\begin{prp}
\label{prp:restriction-of-ec-to-contraction}
For each $F \in \mathcal{E}_G$ and for $i \in \{1,2\}$,
the set~$F_i := F \cap E(G_i)$ is a
(not necessarily proper) subset of some member of~$\mathcal{E}_{G_i}$.
Furthermore, $|F| = |F_1| + |F_2| - |F \cap C|$
where $|F \cap C| \in \{0,1\}$.~\qed
\end{prp}

Consequently, the function~$\varepsilon(G)$ satisfies the following subadditivity property
across separating cuts.

\begin{cor}
\label{cor:vareps-subadditivity-across-sep-cuts}
Let $C$ denote a separating cut of a \mcg~$G$, and let $G_1$~and~$G_2$ denote its
$C$-contractions. Then $\varepsilon(G) \leq \varepsilon(G_1) + \varepsilon(G_2)$.~\qed
\end{cor}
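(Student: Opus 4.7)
The plan is to deduce this corollary directly from Proposition~\ref{prp:restriction-of-ec-to-contraction}, which has already done the essential work. The strategy is: pick an equivalence class $F$ of $G$ attaining the maximum size $\varepsilon(G)$, split it across the separating cut~$C$ into its two pieces~$F_1$ and~$F_2$ sitting in $G_1$ and $G_2$, and bound each piece using the corresponding $\varepsilon(G_i)$.

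More concretely, I would first let $F \in \mathcal{E}_G$ be a largest equivalence class, so that $|F| = \varepsilon(G)$. Setting $F_i := F \cap E(G_i)$ for $i \in \{1,2\}$, Proposition~\ref{prp:restriction-of-ec-to-contraction} tells me two things at once: (a) each $F_i$ is contained in some member of~$\mathcal{E}_{G_i}$, and (b) the sizes satisfy $|F| = |F_1| + |F_2| - |F \cap C|$ with $|F \cap C| \in \{0,1\}$. From~(a) it is immediate that $|F_i| \leq \varepsilon(G_i)$ for each~$i$, since an equivalence class of~$G_i$ has cardinality at most~$\varepsilon(G_i)$.

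Combining these facts, I get
\[
\varepsilon(G) \;=\; |F| \;=\; |F_1| + |F_2| - |F \cap C| \;\leq\; |F_1| + |F_2| \;\leq\; \varepsilon(G_1) + \varepsilon(G_2),
\]
which is the desired inequality.

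There is no real obstacle here; the content of the corollary is essentially a repackaging of Proposition~\ref{prp:restriction-of-ec-to-contraction}. The only mild subtlety worth flagging is that the bound $|F \cap C| \in \{0,1\}$ is not needed for this inequality (we only use $|F \cap C| \geq 0$), but it is good to remember this refinement because, when one later wants sharper bounds (as in Section~\ref{sec:upper-bounds}), one will actually exploit whether the cut edge is absorbed by~$F$ or not.
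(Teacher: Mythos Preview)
Your argument is correct and is exactly the deduction the paper intends: the corollary is stated as an immediate consequence of Proposition~\ref{prp:restriction-of-ec-to-contraction}, and you have unpacked precisely that implication by choosing a largest $F \in \mathcal{E}_G$ and bounding each $|F_i|$ by $\varepsilon(G_i)$.
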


\section{Equivalence classes and tight cuts}
\label{sec:ec-and-tight-cuts}

Throughout this section, we let $C:=\partial(X)$ denote a nontrivial
tight cut of a \mcg~$G$, and we let $G_1:=G/ \overline{X} \rightarrow \overline{x}$
and $G_2:=G/X \rightarrow x$ denote its $C$-contractions.
Using Lemma~\ref{lem:tight-cut-contraction-pm-restriction}, we have
the following stronger conclusion
(in comparison to Lemma~\ref{lem:dependence-inheritance-in-contraction}).

\begin{lem}
\label{lem:dependence-preservation-tight-cuts}
For $i \in \{1,2\}$, for any two edges $e,f \in E(G_i)$, $e \xrightarrow{G} f$ if and only if
$e \xrightarrow{G_i} f$.~\qed
\end{lem}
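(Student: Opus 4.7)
The plan is to use Lemma~\ref{lem:tight-cut-contraction-pm-restriction}, which gives a two-way correspondence between perfect matchings of $G$ and of $G_i$ (restriction and extension), to turn dependence in one graph directly into dependence in the other. The forward implication is already handled by Lemma~\ref{lem:dependence-inheritance-in-contraction}, since every tight cut is in particular a separating cut; so only the reverse implication needs a new argument.

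For the reverse implication, I fix $i \in \{1,2\}$ and edges $e,f \in E(G_i)$ with $e \xrightarrow{G_i} f$, and I aim to show that every \pema~$M$ of~$G$ containing~$e$ also contains~$f$. Given such an~$M$, the restriction $M_i := M \cap E(G_i)$ is, by the second half of Lemma~\ref{lem:tight-cut-contraction-pm-restriction}, a \pema\ of~$G_i$. Since $e \in E(G_i) \cap M$, we have $e \in M_i$; then $e \xrightarrow{G_i} f$ forces $f \in M_i \subseteq M$, as required. Symmetry in~$e$ and~$f$ is not needed here because dependence is a one-edge-at-a-time condition; the argument applies equally for $i=1$ and $i=2$.

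There is essentially no obstacle: the content of the lemma is really just repackaging Lemma~\ref{lem:tight-cut-contraction-pm-restriction}, which provides the missing direction beyond what held for general separating cuts. The only subtlety worth flagging is that $M_i$ is genuinely a \pema\ of $G_i$ (and not merely a matching), which is precisely what distinguishes the tight-cut case from the separating-cut case; without that, the restriction $M_i$ might fail to witness dependence in~$G_i$ in a reverse direction.
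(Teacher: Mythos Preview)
Your proposal is correct and follows exactly the approach the paper indicates: the paper states the lemma with a \qed\ immediately after noting that it follows from Lemma~\ref{lem:tight-cut-contraction-pm-restriction}, and your argument spells out precisely that derivation, using Lemma~\ref{lem:dependence-inheritance-in-contraction} for the forward direction and the restriction half of Lemma~\ref{lem:tight-cut-contraction-pm-restriction} for the reverse.
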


This yields the following two consequences;
the former is a strengthening of Proposition~\ref{prp:restriction-of-ec-to-contraction}
that is applicable in the case of tight cuts; the latter may be viewed as its converse.

\begin{prp}
\label{prp:restriction-of-ec-to-contraction-tight-cut-case}
For each $F \in \mathcal{E}_G$ and for $i \in \{1,2\}$,
if $F_i:=F \cap E(G_i)$ is nonempty
then $F_i$ is a member of~$\mathcal{E}_{G_i}$.
Furthermore, $|F| = |F_1| + |F_2| - |F \cap C|$
where $|F \cap C| \in \{0,1\}$.~\qed
\end{prp}

\begin{prp}
For $i \in \{1,2\}$, let $F_i \in \mathcal{E}_{G_i}$.
Then, for $i \in \{1,2\}$, $F_i$ is a
(not necessarily proper)
subset of some
member of~$\mathcal{E}_G$. Furthermore, if $F_1 \cap F_2$ is nonempty
then (i) $F_1$~and~$F_2$ meet in precisely one edge that lies in the cut~$C$,
(ii) $F:=F_1 \cup F_2$ is a member of~$\mathcal{E}_G$, and (iii) $|F| = |F_1| + |F_2|-1$.~\qed
\end{prp}

The following provides an alternative understanding of the second part of the above
proposition. Let $e$ denote any edge of the tight cut~$C$. For $i \in \{1,2\}$,
let~$F_i$ denote the \ec\ of~$G_i$ that contains $e$;
note that $F_i \cap C = \{e\}$ and that $F_i$ may be a singleton \ec.
Then $F:=F_1 \cup F_2$ is an \ec\ of~$G$ and $F \cap C = \{e\}$.
In other words, the \ec~$F$ (of~$G$) is obtained by ``merging'' an
\ec\ of~$G_1$ with an \ec\ of~$G_2$. In fact, this is how each \ec\ of~$G$,
that meets the cut~$C$, is formed.
This raises the following question.

\begin{que}
\label{que:merging-of-equivalence-classes}
Is it possible for an \ec~$F_1$ of~$G_1$
to merge (i.e., union)
with an \ec~$F_2$ of~$G_2$ to form an \ec~$F$ of~$G$ that does \underline{not}
meet the cut~$C$?
\end{que}

In what follows, we shall answer the above question in the affirmative; in fact,
we will obtain a complete characterization of the circumstances under which this
phenomenon (of merging) occurs. Observe that it suffices to answer the following question.
Under what circumstances do two edges, one in $E(G_1) - C$ and
another in $E(G_2) - C$, become mutually dependent in the graph~$G$?

\smallskip
Henceforth, for $i \in \{1,2\}$, we let $f_i$ denote any edge in~$E(G_i) - C$, and we let~$C_i$
denote the (nonempty) subset of~$C$ that comprises edges that participate in some \pema\
of~$G_i$ with~$f_i$~; in other words,
$C_i := \{e \in C : \{e,f_i\} {\rm ~extends~to~a~perfect~matching~of~} G_i\}$.
Using these definitions, the reader may now verify the following.

\begin{lem}
\footnote{By symmetry: the analogous statement ---
obtained by interchanging all subscripts ``1'' and ``2'' --- holds.}The following are equivalent:
\begin{enumerate}[(i)]
\item $f_1 \xrightarrow{G} f_2$.
\item $e \xrightarrow{G_2} f_2$ for each $e \in C_1$. \qed
\end{enumerate}
\end{lem}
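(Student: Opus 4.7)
The plan is to translate the two dependence statements into statements about perfect matchings and then invoke Lemma~\ref{lem:tight-cut-contraction-pm-restriction} in both directions: since $C$ is tight, (a)~every \pema~$M$ of~$G$ meets~$C$ in exactly one edge and its two restrictions to $E(G_1)$ and to $E(G_2)$ are {\pema}s of~$G_1$~and~$G_2$, respectively; conversely, (b)~any \pema~$M_1$ of~$G_1$ and any \pema~$M_2$ of~$G_2$ that share the same edge of~$C$ can be glued along that common edge into a \pema\ of~$G$. The underlying observation driving everything is that $C_1$ is precisely the set of edges $e \in C$ that can arise as $M \cap C$ for some \pema~$M$ of~$G$ containing~$f_1$.

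For the implication (i)~$\Rightarrow$~(ii), I would fix an arbitrary $e \in C_1$ and, by the definition of~$C_1$, choose a \pema~$M_1^*$ of~$G_1$ with $\{e,f_1\} \subseteq M_1^*$. To verify $e \xrightarrow{G_2} f_2$, I would take any \pema~$M_2$ of~$G_2$ containing~$e$, glue $M_1^*$ and $M_2$ along~$e$ via part~(b) to obtain a \pema~$M$ of~$G$ with $\{e,f_1\} \subseteq M$, and then apply the hypothesis $f_1 \xrightarrow{G} f_2$ to deduce $f_2 \in M$. Since $f_2 \in E(G_2) - C$, this forces $f_2 \in M_2$, as required.

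For the implication (ii)~$\Rightarrow$~(i), I would take any \pema~$M$ of~$G$ containing~$f_1$ and set $\{e\} := M \cap C$, which is well-defined by the tightness of~$C$. The restriction of~$M$ to $E(G_1)$ is a \pema\ of~$G_1$ containing both $f_1$ and $e$, which witnesses $e \in C_1$. The restriction to $E(G_2)$ is a \pema\ of~$G_2$ containing~$e$, and the hypothesis $e \xrightarrow{G_2} f_2$ then forces $f_2$ to lie in this restriction, hence in~$M$.

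I do not anticipate any genuine obstacle: the work is carried out entirely by Lemma~\ref{lem:tight-cut-contraction-pm-restriction}. The only care needed is bookkeeping --- identifying the same edge of~$C$ across $G$, $G_1$, and $G_2$, and using the fact that $f_1,f_2 \notin C$ so that their presence in a \pema\ of~$G$ coincides with their presence in the appropriate restriction to a contraction.
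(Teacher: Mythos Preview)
Your proof is correct and is precisely the intended verification; the paper omits the proof (it ends the lemma with \qed\ after ``the reader may now verify''), and your argument is the natural one it has in mind. One minor remark: the gluing step you label~(b) is not literally the content of Lemma~\ref{lem:tight-cut-contraction-pm-restriction} --- that lemma only asserts extension and restriction, not that \emph{any} two {\pema}s of~$G_1$ and~$G_2$ agreeing on~$C$ glue to a \pema\ of~$G$ --- but this fact is elementary (it holds for any odd cut, tight or not) and needs no justification beyond a sentence.
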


Observe that if each member of~$C_1$ depends on~$f_2$ in the graph~$G_2$
then $C_1 \subseteq C_2$.
Likewise, if each member of~$C_2$ depends on~$f_1$ in the graph~$G_1$
then $C_2 \subseteq C_1$.
This leads us to our next conclusion.

\begin{cor}
\label{cor:mutual-dependence-in-G-characterization}
The following are equivalent:
\begin{enumerate}[(i)]
\item $f_1 \xleftrightarrow{G} f_2$.
\item $C_1 = C_2$ and each of its members is inadmissible in~$G_i - f_i$ for each $i \in \{1,2\}$. \qed
\end{enumerate}
\end{cor}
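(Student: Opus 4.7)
The plan is to reduce the statement to a double (symmetric) application of the preceding lemma, via the dictionary that, for any $e \in C$ and any $i \in \{1,2\}$, the phrase ``$e$ is inadmissible in $G_i - f_i$'' is synonymous with ``$e \xrightarrow{G_i} f_i$''; this equivalence is valid because $e \in C$ while $f_i \notin C$, so $e \neq f_i$.

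The preparatory observation I would record first is that \emph{every} edge $e \in C$ is admissible in both $G_1$ and $G_2$. Indeed, since $G$ is matching covered, some perfect matching $M$ of $G$ contains $e$; tightness of $C$ forces $M \cap C = \{e\}$; and Lemma~\ref{lem:tight-cut-contraction-pm-restriction} ensures that $M \cap E(G_i)$ is a perfect matching of $G_i$ containing~$e$.

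For (i)~$\Rightarrow$~(ii), I would assume $f_1 \xleftrightarrow{G} f_2$ and invoke the preceding lemma: every $e \in C_1$ satisfies $e \xrightarrow{G_2} f_2$. Combined with the admissibility of $e$ in $G_2$, this forces every perfect matching of $G_2$ containing~$e$ to contain $f_2$, whence $\{e,f_2\}$ extends to a perfect matching of $G_2$ and $e \in C_2$; so $C_1 \subseteq C_2$, and by the symmetric version of the lemma $C_2 \subseteq C_1$. The inadmissibility conditions claimed in~(ii) are then immediate rewritings of the two dependence statements $e \xrightarrow{G_2} f_2$ and $e \xrightarrow{G_1} f_1$. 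For (ii)~$\Rightarrow$~(i), the inadmissibility hypotheses unpack (via the dictionary above) into $e \xrightarrow{G_2} f_2$ for every $e \in C_1$, and $e \xrightarrow{G_1} f_1$ for every $e \in C_2$; the preceding lemma and its symmetric counterpart then directly yield $f_1 \xrightarrow{G} f_2$ and $f_2 \xrightarrow{G} f_1$.

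There is no real obstacle; the only conceptual point is the use of the admissibility of every edge of $C$ in each contraction, which is what allows one to upgrade the one-sided dependence conditions supplied by the preceding lemma into the symmetric set equality $C_1 = C_2$ in part~(ii).
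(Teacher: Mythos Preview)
Your proposal is correct and follows essentially the same route as the paper: the paper's ``proof'' is the short paragraph preceding the corollary, which makes exactly the observations you spell out (that the dependence conditions from the preceding lemma force $C_1 \subseteq C_2$ and, symmetrically, $C_2 \subseteq C_1$). One tiny simplification: your preparatory observation that every $e \in C$ is admissible in each $G_i$ is immediate from the fact that $G_i$, being a $C$-contraction along a separating cut, is matching covered; you do not need to invoke tightness and Lemma~\ref{lem:tight-cut-contraction-pm-restriction} for this.
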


We may now easily extend this to two {\ec}es $F_1$ and $F_2$ (instead of two edges)
in order to answer Question~\ref{que:merging-of-equivalence-classes}.

\begin{cor}
\label{cor:merging-of-equivalence-classes}
For $i \in \{1,2\}$, let $F_i \in \mathcal{E}_{G_i}$ such that $F_i \cap C = \emptyset$,
and let $C_i$ denote the
set $\{e \in C : F_i \cup \{e\} {\rm ~extends~to~a~perfect~matching~of~} G_i\}$.
Then $F:=F_1 \cup F_2$ is an \ec\ of~$G$ if and only if the following hold:
\begin{enumerate}[(i)]
\item $C_1 = C_2$ and this set has cardinality two or more.
\item For $i \in \{1,2\}$, each member of~$C_i$ is inadmissible in the graph~$G_i - F_i$. \qed
\end{enumerate}
\end{cor}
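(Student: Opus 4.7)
The plan is to reduce the claim about $F := F_1 \cup F_2$ to a statement about mutual dependence between one pair of edges, one chosen from each $F_i$, and then invoke Corollary~\ref{cor:mutual-dependence-in-G-characterization}. Two preliminary translations facilitate this reduction. First, because $F_i$ is an \ec\ of~$G_i$, every \pema\ of~$G_i$ either contains all of~$F_i$ or is disjoint from~$F_i$ (Proposition~\ref{prp:ec-alternative-definition}); consequently, for any fixed $f_i \in F_i$, the set~$C_i$ defined in the statement coincides with $\{e \in C : \{e, f_i\} \text{ extends to a \pema\ of } G_i\}$, which is precisely the ``per-edge'' set used in Corollary~\ref{cor:mutual-dependence-in-G-characterization}. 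Second, a $C$-edge~$e$ is inadmissible in $G_i - F_i$ if and only if it is inadmissible in $G_i - f_i$ for some (equivalently, every) $f_i \in F_i$: the forward direction is immediate, and for the reverse, any \pema\ of $G_i - f_i$ containing~$e$ must be disjoint from~$F_i$, for otherwise mutual dependence inside~$F_i$ would force it to contain~$f_i$.

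I would then show that $F$ is an \ec\ of~$G$ if and only if $f_1 \xleftrightarrow{G} f_2$ for some (equivalently, every) pair $f_1 \in F_1$, $f_2 \in F_2$. By Proposition~\ref{prp:restriction-of-ec-to-contraction-tight-cut-case}, if $F^* \in \mathcal{E}_G$ meets $E(G_1)$ then $F^* \cap E(G_1)$ is an \ec\ of~$G_1$; if $F^*$ contains~$F_1$, then maximality of equivalence classes forces $F^* \cap E(G_1) = F_1$, and an analogous statement holds for~$F_2$. Since $F_1 \cap C = F_2 \cap C = \emptyset$, any \ec\ of~$G$ containing $F_1 \cup F_2$ therefore equals $F_1 \cup F_2$; so $F$ is an \ec\ precisely when the (unique) {\ec}es of~$G$ containing~$F_1$ and~$F_2$ coincide, which is equivalent to mutual dependence of at least one cross-pair $(f_1,f_2)$.

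Combining the two steps with Corollary~\ref{cor:mutual-dependence-in-G-characterization} now yields exactly conditions~(i) and~(ii), provided one also justifies the cardinality assertion $|C_1| \geq 2$ in~(i). This is the only slightly subtle point, and I would handle it by showing that~(ii) already forces $|C_1| \geq 2$. Indeed, suppose $C_1 = \{e\}$ is a singleton. Any \pema~$M$ of~$G_1$ containing some $f_1 \in F_1$ contains all of~$F_1$, so $M \cap C \subseteq C_1 = \{e\}$, giving $f_1 \xrightarrow{G_1} e$; on the other hand, since $e \in C$ but $F_1 \cap C = \emptyset$, we cannot have $e \xrightarrow{G_1} f_1$, so there exists a \pema\ of~$G_1$ containing~$e$ but not~$f_1$. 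By the mutual-dependence argument of the first paragraph, any such \pema\ is disjoint from~$F_1$, witnessing admissibility of~$e$ in $G_1 - F_1$ and contradicting~(ii). Thus~(ii) already forces $|C_i| \geq 2$, and the characterization follows.
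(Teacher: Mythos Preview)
Your proposal is correct and follows exactly the route the paper intends: the paper gives no explicit proof of this corollary (it is marked with \qed\ after the remark ``We may now easily extend this to two {\ec}es $F_1$ and $F_2$ (instead of two edges)''), and your argument is precisely that extension---reduce to a single pair $(f_1,f_2)$, invoke Corollary~\ref{cor:mutual-dependence-in-G-characterization}, and use Proposition~\ref{prp:restriction-of-ec-to-contraction-tight-cut-case} to recover the full classes. Your treatment of the cardinality condition $|C_1|\geq 2$ (showing it is already forced by~(ii)) is a nice touch that the paper leaves implicit.

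One cosmetic slip: in your second preliminary translation you have the ``forward'' and ``reverse'' labels interchanged. The direction that is genuinely immediate is ``$e$ inadmissible in $G_i - f_i$ $\Rightarrow$ $e$ inadmissible in $G_i - F_i$'' (since $G_i - F_i$ is a subgraph of $G_i - f_i$); the direction requiring the mutual-dependence argument is the other one, and indeed the argument you supply (any \pema\ of $G_i - f_i$ containing $e$ must avoid all of $F_i$) establishes exactly that other direction. The mathematics is fine; only the labels need swapping.
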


The above corollary provides the key insight for constructing graphs
with arbitrarily high $\kappa(G)$ and $\varepsilon(G)$ simultaneously --- by means
of the tight splicing operation --- in Section~\ref{sec:arbitrarily-high-kappa-and-varepsilon}.
The following tool will come in handy in Section~\ref{sec:upper-bounds} ---
where we establish tight upper bounds on $\varepsilon(G)$.

\begin{cor}
\label{cor:one-contraction-brace-case}
Assume that $G_1$ is a brace.
Let $F \in \mathcal{E}_G$ such that $F \cap C = \emptyset$ and
\mbox{$F_i := F \cap E(G_i)$} is nonempty for each $i \in \{1,2\}$.
Then $G_1$ is $C_4$ (up to multiple edges), and $|F_1| = 1$.
\end{cor}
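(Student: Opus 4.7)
The plan is to combine Proposition~\ref{prp:restriction-of-ec-to-contraction-tight-cut-case} with Corollary~\ref{cor:merging-of-equivalence-classes}. From the former, each nonempty $F_i$ is a member of $\mathcal{E}_{G_i}$; applying the latter to the merging $F = F_1 \cup F_2$ (both $F_i$ avoiding $C$, since $F \cap C = \emptyset$), we obtain that $|C_1| = |C_2| \geq 2$ and that every member of $C_1$ is inadmissible in $G_1 - F_1$. This inadmissibility is the key lever against the hypothesis that $G_1$ is a brace.

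First I would rule out $|V(G_1)| \geq 6$. If it held, then by Corollary~\ref{cor:brace-removability}(ii), every edge of $G_1$ would be removable; in particular $F_1$ would be a singleton, say $F_1 = \{f_1\}$, with $G_1 - f_1$ matching covered. But this would force every edge of $G_1 - f_1$---including the members of $C_1 \subseteq C \subseteq E(G_1) \setminus F_1$---to be admissible in $G_1 - F_1$, contradicting Corollary~\ref{cor:merging-of-equivalence-classes}(ii). The case $|V(G_1)| = 2$ is likewise impossible, for then $E(G_1) = C$, forcing $F_1 \subseteq E(G_1) \setminus C = \emptyset$. Since $|V(G_1)|$ must be even and at least $4$, we conclude $|V(G_1)| = 4$, and the only brace of order four is $K_{2,2} = C_4$ (up to multiple edges).

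To conclude $|F_1| = 1$, suppose for contradiction that $|F_1| = 2$. By Corollary~\ref{cor:bip-mutual-dependence}, $F_1$ is then an even $2$-cut of $G_1 = C_4$. A direct inspection of $K_{2,2}$ (with possibly multiple edges) shows that every such even $2$-cut contains exactly one edge incident to the contraction vertex $\overline{x}$; equivalently, by Proposition~\ref{prp:ec-corresponding-to-pm} applied to $G_1$, $F_1$ is a perfect matching of $G_1$, which must saturate $\overline{x}$ and hence meet $C = \partial_{G_1}(\overline{x})$. This contradicts $F_1 \cap C = \emptyset$, forcing $|F_1| = 1$. The main obstacle I foresee is precisely the middle step---articulating the tension between the forced inadmissibility of members of $C_1$ in $G_1 - F_1$ (Corollary~\ref{cor:merging-of-equivalence-classes}(ii)) and the ubiquity of removable edges in sufficiently large braces (Corollary~\ref{cor:brace-removability}(ii)); once this tension is made explicit, both the size and the structure of $G_1$ are essentially determined.
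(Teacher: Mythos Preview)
Your proposal is correct and follows essentially the same route as the paper: both combine Proposition~\ref{prp:restriction-of-ec-to-contraction-tight-cut-case} and Corollary~\ref{cor:merging-of-equivalence-classes} to make the edges of $C_1$ inadmissible in $G_1 - F_1$, then use Corollary~\ref{cor:brace-removability} to force $|V(G_1)| = 4$, and finally observe that every non-singleton equivalence class of $C_4$ (up to multiple edges) must meet $C$. Two cosmetic remarks: the case $|V(G_1)| = 2$ is already excluded by the section's standing assumption that $C$ is nontrivial, and your appeal to Proposition~\ref{prp:ec-corresponding-to-pm} is slightly misdirected (that proposition characterizes when $\varepsilon = n$, not that a particular size-$n$ class is a perfect matching), though your direct-inspection argument via Corollary~\ref{cor:bip-mutual-dependence} already suffices.
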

\begin{proof}
For each $i \in \{1,2\}$, $F_i \cap C = \emptyset$;
by Proposition~\ref{prp:restriction-of-ec-to-contraction-tight-cut-case},
$F_i \in \mathcal{E}_{G_i}$.
We let $C_1$ denote the set
$\{e \in C : F_1 \cup \{e\} {\rm ~extends~to~a~perfect~matching~of~} G_1\}$.
Since $F:=F_1 \cup F_2$ is a member of~$\mathcal{E}_G$, we may invoke
Corollary~\ref{cor:merging-of-equivalence-classes}. Each member of~$C_1$
is inadmissible in the graph~$G_1 - F_1$;
consequently, each member of~$F_1$ is a non-removable edge of the brace~$G_1$.
It follows from Corollary~\ref{cor:brace-removability} that $|V(G_1)|=4$; whence $G_1$
is $C_4$ (up to multiple edges).
Since $F_1$ is an \ec\ of~$G_1$ that does not meet the cut~$C$, we infer that $F_1$
is a singleton \ec.
Thus $|F_1| = 1$.
\end{proof}

\section{Applications}
\label{sec:applications}

\subsection{Upper bounding $\varepsilon(G)$}
\label{sec:upper-bounds}

In this section, we will establish tight upper bounds on $\varepsilon(G)$ in terms
of the invariants $b(G)$~and~$c_4(G)$.
We begin with the class of bipartite graphs (i.e., precisely those graphs for which
$b(G) = 0$).

\begin{prp}
\label{prp:bip-upper-bound}
For every bipartite \mcg~$G$, the following inequality holds:
$\varepsilon(G) \leq 1 + c_4(G)$.
\end{prp}
\begin{proof}
Let $G$ denote any bipartite \mcg.
We proceed by induction on the order of~$G$.
If $G$ is a brace then the desired inequality holds
due to Corollary~\ref{cor:bip-ec}.

\smallskip
Now suppose that $G$ is not a brace.
By Proposition~\ref{prp:bip-iff-zero-bricks}
and Lemma~\ref{lem:tight-cut-both-contractions-nonbipartite-or-one-brace},
$G$ has a nontrivial tight cut~$C:=\partial(X)$ so that both $C$-contractions are
bipartite {\mcg}s and one of them is a brace.
Adjust notation so that $G_1:=G/\overline{X}$ is a brace, and $G_2:=G/X$
is a bipartite \mcg.
By the induction hypothesis, for each $i \in \{1,2\}$,
the inequality $\varepsilon(G_i) \leq 1 + c_4(G_i)$ holds.
Also, \mbox{$c_4(G) = c_4(G_1) + c_4(G_2)$.}

\smallskip
We let $F \in \mathcal{E}_G$. Our goal is to deduce that $|F| \leq 1 + c_4(G)$.
By Proposition~\ref{prp:restriction-of-ec-to-contraction-tight-cut-case},
for each $i \in \{1,2\}$,
the set~$F_i := F \cap E(G_i)$ is a member of~$\mathcal{E}_{G_i}$.
Thus $|F_i| \leq 1 + c_4(G_i)$.
Furthermore, $|F| = |F_1| + |F_2| - |F \cap C|$ where $|F \cap C| \in \{0,1\}$.
Note that, if $F_1 = F$ or if $F_2 = F$, then the desired inequality holds immediately.

\smallskip
Now suppose that each of $F_1$~and~$F_2$ is a proper subset of~$F$.
Note that, if $|F \cap C| = 1$, then $|F| = |F_1| + |F_2| - 1 \leq 1 + c_4(G_1) + 1 + c_4(G_2) - 1
= 1 + c_4(G)$; the desired inequality holds.
Now suppose that $F \cap C = \emptyset$.
Since $G_1$ is a brace, it follows from Corollary~\ref{cor:one-contraction-brace-case}
that $G_1$ is $C_4$ (up to multiple edges) and $|F_1| = 1$; whence
$|F| = |F_1| + |F_2| \leq 1 + 1 + c_4(G_2) = 1 + c_4(G_1) + c_4(G_2) = 1 + c_4(G)$.
This completes the proof of Proposition~\ref{prp:bip-upper-bound}.
\end{proof}

The upper bound established in Proposition~\ref{prp:bip-upper-bound}
is tight since $c_4(C_{2n}) = n-1$ and $\varepsilon(C_{2n}) = n$,
where $C_{2n}$ is the simple even cycle of order~$2n$; one may easily
construct other such examples.
We now move on to the class of nonbipartite graphs.

\begin{prp}
\label{prp:nonbip-upper-bound}
For every nonbipartite \mcg~$G$, the following inequality holds:
$\varepsilon(G) \leq 2 \cdot b(G) + c_4(G)$.
Furthermore, if $G$ is free of even $2$-cuts then $\varepsilon(G) \leq 2 \cdot b(G)$.
\end{prp}
\begin{proof}
Let $G$ denote any nonbipartite \mcg. We proceed by induction on the order of~$G$.
If $G$ is a brick then the desired inequality holds due to Corollary~\ref{cor:brick-ec}.
Now suppose that $G$ is not a brick; whence $G$ has nontrivial tight cut(s).

\smallskip
First consider the case in which $G$ has a nontrivial tight cut~$C$ so that
both \mbox{$C$-contractions}, say~$G_1$ and $G_2$, are nonbipartite.
By the induction hypothesis, $\varepsilon(G_i) \leq 2 \cdot b(G_i) + c_4(G_i)$
for each $i \in \{1,2\}$. By subadditivity (Corollary~\ref{cor:vareps-subadditivity-across-sep-cuts}):
$\varepsilon(G) \leq \varepsilon(G_1) + \varepsilon(G_2)
\leq 2 \cdot b(G_1) + c_4(G_1) + 2 \cdot b(G_2) + c_4(G_2) = 2 \cdot b(G) + c_4(G)$.

\smallskip
Now consider the case in which $G$ does not have a nontrivial tight cut~$C$
such that both \mbox{$C$-contractions} are nonbipartite.
By Proposition~\ref{prp:bip-iff-zero-bricks}
and Lemma~\ref{lem:tight-cut-both-contractions-nonbipartite-or-one-brace},
$G$ has a nontrivial tight cut~$D:=\partial(X)$ so that
one of the $D$-contractions is a brace, whereas the other is a nonbipartite \mcg.
Adjust notation so that $G_1:=G/\overline{X}$ is a brace, and $G_2:=G/X$
is nonbipartite. Note that $b(G) = b(G_2)$ and that $c_4(G) = c_4(G_1) + c_4(G_2)$.
By the induction hypothesis,
$\varepsilon(G_2) \leq 2 \cdot b(G_2) + c_4 (G_2) = 2 \cdot b(G) + c_4 (G_2)$.
Also, since $G_1$ is bipartite, Proposition~\ref{prp:bip-upper-bound} implies that
$\varepsilon(G_1) \leq 1 + c_4(G_1)$.

\smallskip
We let $F \in \mathcal{E}_G$. Our goal is to deduce that $|F| \leq 2 \cdot b(G) + c_4(G)$.
By Proposition~\ref{prp:restriction-of-ec-to-contraction-tight-cut-case},
for each $i \in \{1,2\}$, the set~$F_i := F \cap E(G_i)$
is a member of~$\mathcal{E}_{G_i}$.
Also, $|F| = |F_1| + |F_2| - |F \cap C|$ where $|F \cap C| \in \{0,1\}$.
Note that if $F = F_1$ then $|F| \leq 1 + c_4(G_1) < 2 \cdot b(G) + c_4(G)$,
and if $F = F_2$ then $|F| \leq 2 \cdot b(G) + c_4(G_2) \leq 2 \cdot b(G) + c_4(G)$.

\smallskip
Now suppose that each of $F_1$~and~$F_2$ is a proper subset of~$F$.
Note that, if $|F \cap C| = 1$, then
$|F| = |F_1| + |F_2| - 1 \leq 1 + c_4(G_1) + 2 \cdot b(G) + c_4(G_2) - 1 = 2 \cdot b(G) + c_4(G)$;
the desired inequality holds.
Now suppose that $F \cap C = \emptyset$.
Since $G_1$ is a brace, it follows from Corollary~\ref{cor:one-contraction-brace-case}
that $G_1$ is $C_4$ (up to multiple edges) and $|F_1|=1$; whence
$|F| = |F_1| + |F_2| \leq 1 + 2 \cdot b(G) + c_4(G_2) = c_4(G_1) + 2 \cdot b(G) + c_4(G_2)
= 2 \cdot b(G) + c_4(G)$.
This completes the proof of the first part of Proposition~\ref{prp:nonbip-upper-bound};
the second part follows by invoking Proposition~\ref{prp:even-2-cut-existence}.
\end{proof}

One may view the above proposition as a generalization
of Corollary~\ref{cor:brick-ec}.
The upper bound established in Proposition~\ref{prp:nonbip-upper-bound} is tight
for the simple reason that there exist (infinitely many) bricks with doubleton equivalence classes.
However, it would be interesting to find tight examples (perhaps infinite families)
with arbitrarily high $b(G)$; the technique used
in Section~\ref{sec:arbitrarily-high-kappa-and-varepsilon} might be
helpful in constructing such graphs.

\subsection{Building graphs with arbitrarily high $\kappa(G)$~and~$\varepsilon(G)$}
\label{sec:arbitrarily-high-kappa-and-varepsilon}

In this section, we prove the following that affirmatively answers a recent question
of He, Wei, Ye and Zhai~\cite{hwyz19}.
The reader may find it helpful to see Figure~\ref{fig:arbitrarily-high-kappa-and-varepsilon}
for a demonstration of the construction provided in the proof.

\begin{prp}
\label{prp:arbitrarily-high-kappa-and-varepsilon}
For any pair of positive integers $p$ and $q$, there exists a \mcg~$G$
such that $\kappa(G) \geq p$ and $\varepsilon(G) \geq q$.
\end{prp}
\begin{proof}
Clearly, we may assume that $p \geq 2$ and $q \geq 2$.
We begin by choosing a pair~$(H[A,B],a)$ where
$H[A,B]$ is a $(p+1)$-connected brace\footnote{For instance, one may choose $H$ to be the complete
bipartite graph $K_{p+1,p+1}$.} and $a \in A$ is a fixed vertex.
Now we construct a simple bipartite $G_0[A_0,B_0]$ as follows.
\begin{enumerate}[(i)]
\item We take $q$ disjoint copies of the pair $(H[A,B],a)$:
$(H_i[A_i,B_i],a_i)$, so that $a_i \in A_i$, for each $i \in \{1,2, \dots, q\}$.
We let $A_0 := \cup_{i=1}^{q} A_i$ and $B_0 := \cup_{i=1}^{q} B_i$.
\item For each $i \in \{1,2, \dots, q-1\}$, we add $p$ edges, each joining $a_i$ with some vertex
in~$B_{i+1}$, and we denote this set of edges by~$C_i$.
\item We add an edge~$f_0$ joining $a_q$ with some vertex in~$B_1$.
\end{enumerate}

We let $C^* := C_1 \cup C_2 \cup \cdots \cup C_{q-1}$.
The reader may easily verify the following.
\begin{sta}
\label{sta:G0-bip-mcg}
The simple bipartite graph~$G_0[A_0,B_0]$
is \mc\footnote{Invoke Proposition~\ref{prp:bip-mcg-characterization}},
each member of~$C^*$ is inadmissible in $G_0 - f_0$,
and $\{f_0\} \in \mathcal{E}_{G_0}$. \qed
\end{sta}

For $i \in \{1,2, \dots, q-1\}$, we construct a simple nonbipartite graph~$J_i$ as follows.
\begin{enumerate}[(i)]
\item We take a new copy of the pair~$(H[A,B],a)$, say~$(L_i[U_i,V_i],u_i)$, so that $u_i \in U_i$.
\item We add $p$ edges, each joining $u_i$ with some vertex in~$U_i - u_i$,
and we denote this set of edges by~$C'_i$.
\item We add an edge~$f_i$ that has both ends in~$V_i$.
\end{enumerate}

The reader may easily verify the following.
\begin{sta}
\label{sta:J-bricks}
For $i \in \{1,2,\dots,q-1\}$,
the simple nonbipartite graph~$J_i$
is a brick\footnote{Use Proposition~\ref{prp:brace-characterization} to show that $J_i$
is \mc. Adding edges cannot create ``new'' tight~cuts.},
each member of~$C'_i$ is inadmissible in~$J_i - f_i$,
and $\{f_i\} \in \mathcal{E}_{J_i}$. \qed
\end{sta}

\tikzstyle{every node}=[circle, draw, fill=white,inner sep=0pt, minimum width=4pt]
\begin{figure}[!htb]
\centering
\subfigure[$G_0$]{
\begin{tikzpicture}[scale=0.75]

\draw[ultra thick] (0,2) to [out=60,in=180] (2,3) to [out=0,in=180] (13,3) to [out=0,in=60] (12.75,0);
\draw (14,2.7)node[nodelabel]{$f_0$};

\draw[ultra thick] (3.5,1.1) to [out=0,in=90] (4,0.5);
\draw (4.1,0.2)node[nodelabel]{$C_1$};

\draw[ultra thick] (8.75,1.1) to [out=0,in=90] (9.25,0.5);
\draw (9.35,0.2)node[nodelabel]{$C_2$};


\draw (2.25,0) -- (0,2);
\draw (2.25,0) -- (0.75,2);
\draw (2.25,0) -- (1.5,2);
\draw (2.25,0) -- (2.25,2);
\draw (2.25,0) -- (5.25,2);
\draw (2.25,0) -- (6,2);
\draw (2.25,0) -- (6.75,2);

\draw (7.5,0) -- (5.25,2);
\draw (7.5,0) -- (6,2);
\draw (7.5,0) -- (6.75,2);
\draw (7.5,0) -- (7.5,2);
\draw (7.5,0) -- (10.5,2);
\draw (7.5,0) -- (11.25,2);
\draw (7.5,0) -- (12,2);

\draw (0,0) -- (0,2);
\draw (0,0) -- (0.75,2);
\draw (0,0) -- (1.5,2);
\draw (0,0) -- (2.25,2);
\draw (0.75,0) -- (0,2);
\draw (0.75,0) -- (0.75,2);
\draw (0.75,0) -- (1.5,2);
\draw (0.75,0) -- (2.25,2);
\draw (1.5,0) -- (0,2);
\draw (1.5,0) -- (0.75,2);
\draw (1.5,0) -- (1.5,2);
\draw (1.5,0) -- (2.25,2);

\draw (5.25,0) -- (5.25,2);
\draw (5.25,0) -- (6,2);
\draw (5.25,0) -- (6.75,2);
\draw (5.25,0) -- (7.5,2);
\draw (6,0) -- (5.25,2);
\draw (6,0) -- (6,2);
\draw (6,0) -- (6.75,2);
\draw (6,0) -- (7.5,2);
\draw (6.75,0) -- (5.25,2);
\draw (6.75,0) -- (6,2);
\draw (6.75,0) -- (6.75,2);
\draw (6.75,0) -- (7.5,2);

\draw (10.5,0) -- (10.5,2);
\draw (10.5,0) -- (11.25,2);
\draw (10.5,0) -- (12,2);
\draw (10.5,0) -- (12.75,2);
\draw (11.25,0) -- (10.5,2);
\draw (11.25,0) -- (11.25,2);
\draw (11.25,0) -- (12,2);
\draw (11.25,0) -- (12.75,2);
\draw (12,0) -- (10.5,2);
\draw (12,0) -- (11.25,2);
\draw (12,0) -- (12,2);
\draw (12,0) -- (12.75,2);
\draw (12.75,0) -- (10.5,2);
\draw (12.75,0) -- (11.25,2);
\draw (12.75,0) -- (12,2);
\draw (12.75,0) -- (12.75,2);

\draw (0,0)node{};
\draw (0,2)node{};
\draw (0.75,0)node{};
\draw (0.75,2)node{};
\draw (1.5,0)node{};
\draw (1.5,2)node{};
\draw (2.25,0)node{};
\draw (2.25,2)node{};

\draw (5.25,0)node{};
\draw (5.25,2)node{};
\draw (6,0)node{};
\draw (6,2)node{};
\draw (6.75,0)node{};
\draw (6.75,2)node{};
\draw (7.5,0)node{};
\draw (7.5,2)node{};

\draw (10.5,0)node{};
\draw (10.5,2)node{};
\draw (11.25,0)node{};
\draw (11.25,2)node{};
\draw (12,0)node{};
\draw (12,2)node{};
\draw (12.75,0)node{};
\draw (12.75,2)node{};


\draw (2.25,0)node[nodelabel,below]{$a_1$};
\draw (7.5,0)node[nodelabel,below]{$a_2$};

\draw[ultra thin] (-0.2,1.8) -- (-0.2,2.2) -- (12.95,2.2) -- (12.95,1.8) -- (-0.2,1.8);
\draw (-0.6,2)node[nodelabel]{$B_0$};

\end{tikzpicture}
\label{fig:G0}
}
\subfigure[$J_1$]
{
\begin{tikzpicture}[scale=0.75]

\draw[ultra thick] (1.4,1.85) -- (2.2,1.85);
\draw (2.6,1.85)node[nodelabel]{$C'_1$};

\draw (1.5,2.25) -- (0,0);
\draw (1.5,2.25) -- (0,0.75);
\draw (1.5,2.25) -- (0,1.5);
\draw (1.5,2.25) -- (0,2.25);
\draw (1.5,2.25) to [out=300,in=60] (1.5,1.5);
\draw (1.5,2.25) to [out=315,in=45] (1.5,0.75);
\draw (1.5,2.25) to [out=330,in=30] (1.5,0);

\draw (1.5,0) -- (0,0);
\draw (1.5,0) -- (0,0.75);
\draw (1.5,0) -- (0,1.5);
\draw (1.5,0) -- (0,2.25);
\draw (1.5,0.75) -- (0,0);
\draw (1.5,0.75) -- (0,0.75);
\draw (1.5,0.75) -- (0,1.5);
\draw (1.5,0.75) -- (0,2.25);
\draw (1.5,1.5) -- (0,0);
\draw (1.5,1.5) -- (0,0.75);
\draw (1.5,1.5) -- (0,1.5);
\draw (1.5,1.5) -- (0,2.25);

\draw[ultra thick] (0,0) to [out=135,in=225] (0,0.75);
\draw (-0.5,0.375)node[nodelabel]{$f_1$};

\draw (0,0)node{};
\draw (0,0.75)node{};
\draw (0,1.5)node{};
\draw (0,2.25)node{};
\draw (1.5,0)node{};
\draw (1.5,0.75)node{};
\draw (1.5,1.5)node{};
\draw (1.5,2.25)node{};

\draw (1.5,2.25)node{}node[nodelabel,above]{$u_1$};


\end{tikzpicture}
\label{fig:J1}
}
\subfigure[$G_1$]{
\begin{tikzpicture}[scale=0.75]

\draw[ultra thick] (0,2) to [out=60,in=180] (2,3) to [out=0,in=180] (13,3) to [out=0,in=60] (12.75,0);
\draw (14,2.7)node[nodelabel]{$f_0$};
\draw[ultra thick] (2.75,-3) to [out=135,in=225] (2.75,-2.25);
\draw (2.35,-2.625)node[nodelabel]{$f_1$};

\draw[ultra thick] (1.5,-1) to [out=45,in=135] (5.25,-1);
\draw (5.6,-1.2)node[nodelabel]{$D_1$};

\draw[ultra thick] (8.75,1.1) to [out=0,in=90] (9.25,0.5);
\draw (9.35,0.2)node[nodelabel]{$C_2$};

\draw (0,2) to [out=315,in=90] (1.9,-0.5) to [out=270,in=210] (2.75,-3);
\draw (0.75,2) to [out=330,in=90] (2.25,-0.5) to [out=270,in=150] (2.75,-2.25);
\draw (1.5,2) to [out=345,in=90] (2.5,-0.5) to [out=270,in=120] (2.75,-1.5);
\draw (2.25,2) to [out=360,in=90] (2.75,-0.5) to [out=270,in=90] (2.75,-0.75);
\draw (6.75,2) to [out=210,in=90] (4.75,-0.5) to [out=270,in=0] (4.25,-3);
\draw (6,2) to [out=195,in=90] (4.5,-0.5) to [out=270,in=45] (4.25,-2.25);
\draw (5.25,2) to [out=180,in=90] (4.25,-0.5) to [out=270,in=90] (4.25,-1.5);

\draw (4.25,-3) -- (2.75,-3);
\draw (4.25,-3) -- (2.75,-2.25);
\draw (4.25,-3) -- (2.75,-1.5);
\draw (4.25,-3) -- (2.75,-0.75);
\draw (4.25,-2.25) -- (2.75,-3);
\draw (4.25,-2.25) -- (2.75,-2.25);
\draw (4.25,-2.25) -- (2.75,-1.5);
\draw (4.25,-2.25) -- (2.75,-0.75);
\draw (4.25,-1.5) -- (2.75,-3);
\draw (4.25,-1.5) -- (2.75,-2.25);
\draw (4.25,-1.5) -- (2.75,-1.5);
\draw (4.25,-1.5) -- (2.75,-0.75);

\draw (2.75,-3)node{};
\draw (4.25,-3)node{};
\draw (2.75,-2.25)node{};
\draw (4.25,-2.25)node{};
\draw (2.75,-1.5)node{};
\draw (4.25,-1.5)node{};
\draw (2.75,-0.75)node{};

\draw (7.5,0) -- (5.25,2);
\draw (7.5,0) -- (6,2);
\draw (7.5,0) -- (6.75,2);
\draw (7.5,0) -- (7.5,2);
\draw (7.5,0) -- (10.5,2);
\draw (7.5,0) -- (11.25,2);
\draw (7.5,0) -- (12,2);

\draw (0,0) -- (0,2);
\draw (0,0) -- (0.75,2);
\draw (0,0) -- (1.5,2);
\draw (0,0) -- (2.25,2);
\draw (0.75,0) -- (0,2);
\draw (0.75,0) -- (0.75,2);
\draw (0.75,0) -- (1.5,2);
\draw (0.75,0) -- (2.25,2);
\draw (1.5,0) -- (0,2);
\draw (1.5,0) -- (0.75,2);
\draw (1.5,0) -- (1.5,2);
\draw (1.5,0) -- (2.25,2);

\draw (5.25,0) -- (5.25,2);
\draw (5.25,0) -- (6,2);
\draw (5.25,0) -- (6.75,2);
\draw (5.25,0) -- (7.5,2);
\draw (6,0) -- (5.25,2);
\draw (6,0) -- (6,2);
\draw (6,0) -- (6.75,2);
\draw (6,0) -- (7.5,2);
\draw (6.75,0) -- (5.25,2);
\draw (6.75,0) -- (6,2);
\draw (6.75,0) -- (6.75,2);
\draw (6.75,0) -- (7.5,2);

\draw (10.5,0) -- (10.5,2);
\draw (10.5,0) -- (11.25,2);
\draw (10.5,0) -- (12,2);
\draw (10.5,0) -- (12.75,2);
\draw (11.25,0) -- (10.5,2);
\draw (11.25,0) -- (11.25,2);
\draw (11.25,0) -- (12,2);
\draw (11.25,0) -- (12.75,2);
\draw (12,0) -- (10.5,2);
\draw (12,0) -- (11.25,2);
\draw (12,0) -- (12,2);
\draw (12,0) -- (12.75,2);
\draw (12.75,0) -- (10.5,2);
\draw (12.75,0) -- (11.25,2);
\draw (12.75,0) -- (12,2);
\draw (12.75,0) -- (12.75,2);

\draw (0,0)node{};
\draw (0,2)node{};
\draw (0.75,0)node{};
\draw (0.75,2)node{};
\draw (1.5,0)node{};
\draw (1.5,2)node{};
\draw (2.25,2)node{};

\draw (5.25,0)node{};
\draw (5.25,2)node{};
\draw (6,0)node{};
\draw (6,2)node{};
\draw (6.75,0)node{};
\draw (6.75,2)node{};
\draw (7.5,0)node{};
\draw (7.5,2)node{};

\draw (10.5,0)node{};
\draw (10.5,2)node{};
\draw (11.25,0)node{};
\draw (11.25,2)node{};
\draw (12,0)node{};
\draw (12,2)node{};
\draw (12.75,0)node{};
\draw (12.75,2)node{};


\draw (7.5,0)node[nodelabel,below]{$a_2$};

\draw[ultra thin] (-0.2,1.8) -- (-0.2,2.2) -- (12.95,2.2) -- (12.95,1.8) -- (-0.2,1.8);
\draw (-0.6,2)node[nodelabel]{$B_0$};

\end{tikzpicture}
\label{fig:G1}
}
\subfigure[$J_2$]
{
\begin{tikzpicture}[scale=0.75]

\draw[ultra thick] (1.4,1.85) -- (2.2,1.85);
\draw (2.6,1.85)node[nodelabel]{$C'_2$};

\draw (1.5,2.25) -- (0,0);
\draw (1.5,2.25) -- (0,0.75);
\draw (1.5,2.25) -- (0,1.5);
\draw (1.5,2.25) -- (0,2.25);
\draw (1.5,2.25) to [out=300,in=60] (1.5,1.5);
\draw (1.5,2.25) to [out=315,in=45] (1.5,0.75);
\draw (1.5,2.25) to [out=330,in=30] (1.5,0);

\draw (1.5,0) -- (0,0);
\draw (1.5,0) -- (0,0.75);
\draw (1.5,0) -- (0,1.5);
\draw (1.5,0) -- (0,2.25);
\draw (1.5,0.75) -- (0,0);
\draw (1.5,0.75) -- (0,0.75);
\draw (1.5,0.75) -- (0,1.5);
\draw (1.5,0.75) -- (0,2.25);
\draw (1.5,1.5) -- (0,0);
\draw (1.5,1.5) -- (0,0.75);
\draw (1.5,1.5) -- (0,1.5);
\draw (1.5,1.5) -- (0,2.25);

\draw[ultra thick] (0,0) to [out=135,in=225] (0,0.75);
\draw (-0.5,0.375)node[nodelabel]{$f_2$};


\draw (0,0)node{};
\draw (0,0.75)node{};
\draw (0,1.5)node{};
\draw (0,2.25)node{};
\draw (1.5,0)node{};
\draw (1.5,0.75)node{};
\draw (1.5,1.5)node{};
\draw (1.5,2.25)node{};

\draw (1.5,2.25)node{}node[nodelabel,above]{$u_2$};


\end{tikzpicture}
\label{fig:J2}
}
\subfigure[$G_2$]{
\begin{tikzpicture}[scale=0.75]

\draw[ultra thick] (0,2) to [out=60,in=180] (2,3) to [out=0,in=180] (13,3) to [out=0,in=60] (12.75,0);
\draw (14,2.7)node[nodelabel]{$f_0$};
\draw[ultra thick] (2.75,-3) to [out=135,in=225] (2.75,-2.25);
\draw (2.35,-2.625)node[nodelabel]{$f_1$};
\draw[ultra thick] (8,-3) to [out=135,in=225] (8,-2.25);
\draw (7.60,-2.625)node[nodelabel]{$f_2$};

\draw[ultra thick] (1.5,-1) to [out=45,in=135] (5.25,-1);
\draw (5.6,-1.2)node[nodelabel]{$D_1$};

\draw[ultra thick] (6.75,-1) to [out=45,in=135] (10.5,-1);
\draw (10.85,-1.2)node[nodelabel]{$D_2$};

\draw (5.25,2) to [out=315,in=90] (7.15,-0.5) to [out=270,in=210] (8,-3);
\draw (6,2) to [out=330,in=90] (7.5,-0.5) to [out=270,in=150] (8,-2.25);
\draw (6.75,2) to [out=345,in=90] (7.75,-0.5) to [out=270,in=120] (8,-1.5);
\draw (7.5,2) to [out=360,in=90] (8,-0.5) to [out=270,in=90] (8,-0.75);
\draw (12,2) to [out=210,in=90] (10,-0.5) to [out=270,in=0] (9.5,-3);
\draw (11.25,2) to [out=195,in=90] (9.75,-0.5) to [out=270,in=45] (9.5,-2.25);
\draw (10.5,2) to [out=180,in=90] (9.5,-0.5) to [out=270,in=90] (9.5,-1.5);

\draw (0,2) to [out=315,in=90] (1.9,-0.5) to [out=270,in=210] (2.75,-3);
\draw (0.75,2) to [out=330,in=90] (2.25,-0.5) to [out=270,in=150] (2.75,-2.25);
\draw (1.5,2) to [out=345,in=90] (2.5,-0.5) to [out=270,in=120] (2.75,-1.5);
\draw (2.25,2) to [out=360,in=90] (2.75,-0.5) to [out=270,in=90] (2.75,-0.75);
\draw (6.75,2) to [out=210,in=90] (4.75,-0.5) to [out=270,in=0] (4.25,-3);
\draw (6,2) to [out=195,in=90] (4.5,-0.5) to [out=270,in=45] (4.25,-2.25);
\draw (5.25,2) to [out=180,in=90] (4.25,-0.5) to [out=270,in=90] (4.25,-1.5);

\draw (9.5,-3) -- (8,-3);
\draw (9.5,-3) -- (8,-2.25);
\draw (9.5,-3) -- (8,-1.5);
\draw (9.5,-3) -- (8,-0.75);
\draw (9.5,-2.25) -- (8,-3);
\draw (9.5,-2.25) -- (8,-2.25);
\draw (9.5,-2.25) -- (8,-1.5);
\draw (9.5,-2.25) -- (8,-0.75);
\draw (9.5,-1.5) -- (8,-3);
\draw (9.5,-1.5) -- (8,-2.25);
\draw (9.5,-1.5) -- (8,-1.5);
\draw (9.5,-1.5) -- (8,-0.75);

\draw (8,-3)node{};
\draw (9.5,-3)node{};
\draw (8,-2.25)node{};
\draw (9.5,-2.25)node{};
\draw (8,-1.5)node{};
\draw (9.5,-1.5)node{};
\draw (8,-0.75)node{};

\draw (4.25,-3) -- (2.75,-3);
\draw (4.25,-3) -- (2.75,-2.25);
\draw (4.25,-3) -- (2.75,-1.5);
\draw (4.25,-3) -- (2.75,-0.75);
\draw (4.25,-2.25) -- (2.75,-3);
\draw (4.25,-2.25) -- (2.75,-2.25);
\draw (4.25,-2.25) -- (2.75,-1.5);
\draw (4.25,-2.25) -- (2.75,-0.75);
\draw (4.25,-1.5) -- (2.75,-3);
\draw (4.25,-1.5) -- (2.75,-2.25);
\draw (4.25,-1.5) -- (2.75,-1.5);
\draw (4.25,-1.5) -- (2.75,-0.75);

\draw (2.75,-3)node{};
\draw (4.25,-3)node{};
\draw (2.75,-2.25)node{};
\draw (4.25,-2.25)node{};
\draw (2.75,-1.5)node{};
\draw (4.25,-1.5)node{};
\draw (2.75,-0.75)node{};

\draw (0,0) -- (0,2);
\draw (0,0) -- (0.75,2);
\draw (0,0) -- (1.5,2);
\draw (0,0) -- (2.25,2);
\draw (0.75,0) -- (0,2);
\draw (0.75,0) -- (0.75,2);
\draw (0.75,0) -- (1.5,2);
\draw (0.75,0) -- (2.25,2);
\draw (1.5,0) -- (0,2);
\draw (1.5,0) -- (0.75,2);
\draw (1.5,0) -- (1.5,2);
\draw (1.5,0) -- (2.25,2);

\draw (5.25,0) -- (5.25,2);
\draw (5.25,0) -- (6,2);
\draw (5.25,0) -- (6.75,2);
\draw (5.25,0) -- (7.5,2);
\draw (6,0) -- (5.25,2);
\draw (6,0) -- (6,2);
\draw (6,0) -- (6.75,2);
\draw (6,0) -- (7.5,2);
\draw (6.75,0) -- (5.25,2);
\draw (6.75,0) -- (6,2);
\draw (6.75,0) -- (6.75,2);
\draw (6.75,0) -- (7.5,2);

\draw (10.5,0) -- (10.5,2);
\draw (10.5,0) -- (11.25,2);
\draw (10.5,0) -- (12,2);
\draw (10.5,0) -- (12.75,2);
\draw (11.25,0) -- (10.5,2);
\draw (11.25,0) -- (11.25,2);
\draw (11.25,0) -- (12,2);
\draw (11.25,0) -- (12.75,2);
\draw (12,0) -- (10.5,2);
\draw (12,0) -- (11.25,2);
\draw (12,0) -- (12,2);
\draw (12,0) -- (12.75,2);
\draw (12.75,0) -- (10.5,2);
\draw (12.75,0) -- (11.25,2);
\draw (12.75,0) -- (12,2);
\draw (12.75,0) -- (12.75,2);

\draw (0,0)node{};
\draw (0,2)node{};
\draw (0.75,0)node{};
\draw (0.75,2)node{};
\draw (1.5,0)node{};
\draw (1.5,2)node{};
\draw (2.25,2)node{};

\draw (5.25,0)node{};
\draw (5.25,2)node{};
\draw (6,0)node{};
\draw (6,2)node{};
\draw (6.75,0)node{};
\draw (6.75,2)node{};
\draw (7.5,2)node{};

\draw (10.5,0)node{};
\draw (10.5,2)node{};
\draw (11.25,0)node{};
\draw (11.25,2)node{};
\draw (12,0)node{};
\draw (12,2)node{};
\draw (12.75,0)node{};
\draw (12.75,2)node{};

\draw[ultra thin] (-0.2,1.8) -- (-0.2,2.2) -- (12.95,2.2) -- (12.95,1.8) -- (-0.2,1.8);
\draw (-0.6,2)node[nodelabel]{$B_0$};

\end{tikzpicture}
\label{fig:G2}
}
\caption{Illustration for the proof of Proposition~\ref{prp:arbitrarily-high-kappa-and-varepsilon} for
$p=3$ and $q=3$ (and $H:=K_{4,4}$)}
\label{fig:arbitrarily-high-kappa-and-varepsilon}
\end{figure}
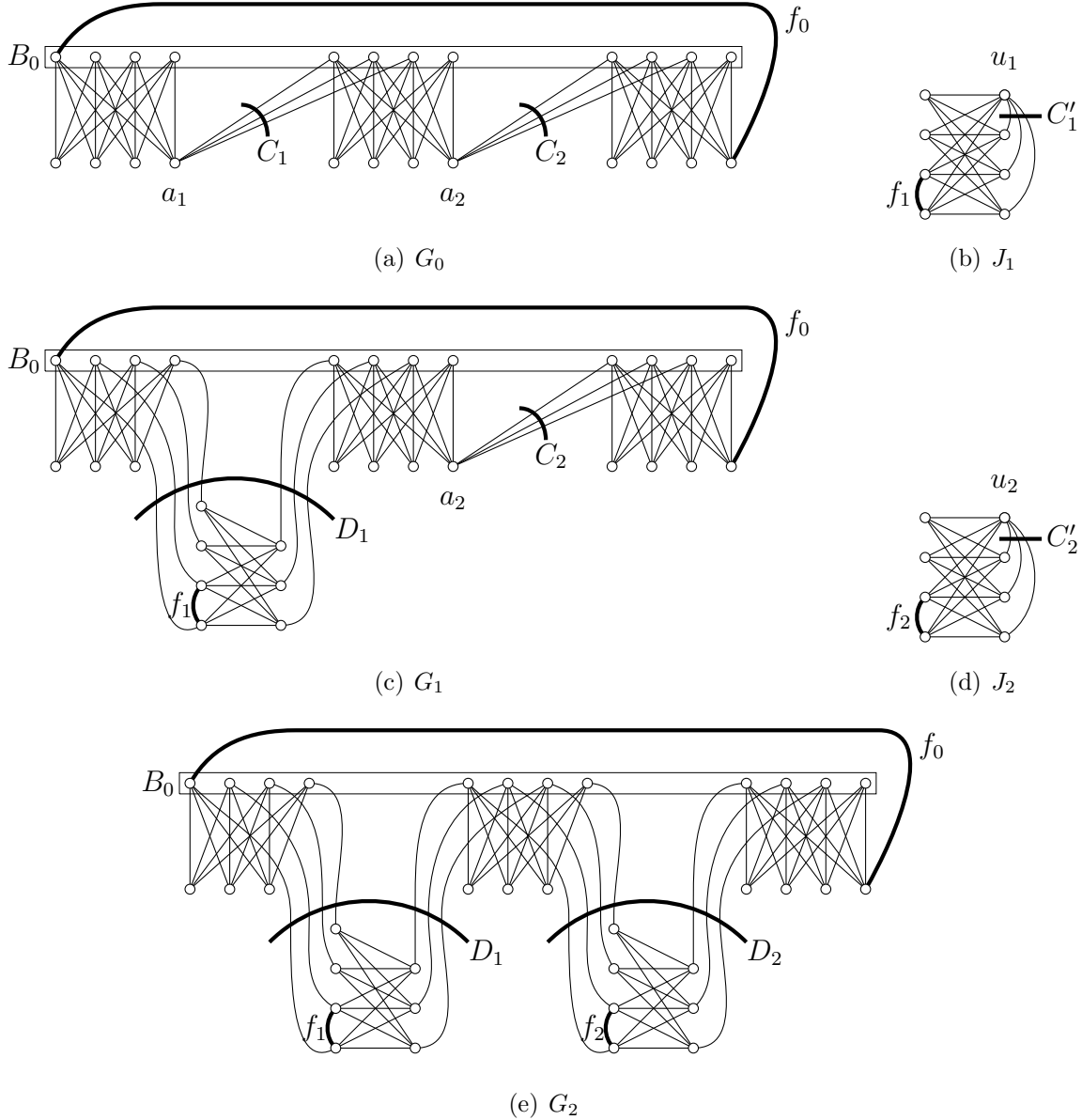

Finally, for $i \in \{1,2, \dots, q-1\}$, we let $G_i := (G_{i-1} \odot J_i)_{a_i,u_i,\pi_i}$
where $\pi_i$ is any bijection between $\partial_{G_{i-1}}(a_i)$ and $\partial_{J_i}(u_i)$
that maps each member of~$C_i$ to some member of~$C'_i$, and we let
$D_i$~denote the corresponding splicing cut of~$G_i$.

\smallskip
By Lemma~\ref{lem:splicing-simple}~and~\ref{lem:splicing-mcg},
all of the graphs~$G_1, G_2, \dots, G_{q-1}$
are simple and \mc; note that $B_0$ is a barrier in each of them. This proves the following.

\begin{sta}
\label{sta:intermediate-graphs}
For $i \in \{1,2,\dots,q\}$, the simple graph~$G_i$ is \mc\ and
$D_i$ is a barrier cut associated with the barrier~$B_0$. \qed
\end{sta}

For each $i \in \{0,1, \dots, q-1\}$, we let $F_i := \{f_0, f_1, \dots, f_i\}$.

\begin{sta}
\label{sta:equivalence-class-growth}
For $i \in \{0,1,2,\dots,q-1\}$, each member of~$C^*$
is inadmissible in~$G_i - F_i$, and $F_i \in \mathcal{E}_{G_i}$.
\end{sta}
\begin{proof}
We proceed by induction on~$i$.
By \ref{sta:G0-bip-mcg}, the desired conclusions hold when $i=0$.

\smallskip
Now suppose that $1 \leq i \leq q-1$, and that the desired conclusions hold for $i-1$.
By the induction hypothesis, each member of~$C^*$
is inadmissible in $G_{i-1} - F_{i-1}$.
That is, if $e \in C^*$
and if $f \in F_{i-1}$, then $e \xrightarrow{G_{i-1}} f$;
consequently, since~$G_i$ is obtained by tight splicing $G_{i-1}$~and~$J_i$,
Lemma~\ref{lem:dependence-preservation-tight-cuts} implies
that $e \xrightarrow{G_i} f$.
In other words, each member of~$C^*$ is inadmissible in~$G_i - F_{i-1}$;
whence each member of~$C^*$ is inadmissible in~$G_i - F_i$.

\smallskip
By the induction hypothesis, $F_{i-1} \in \mathcal{E}_{G_{i-1}}$
and each member of~$C_i$ is inadmissible in \mbox{$G_{i-1} - F_{i-1}$}.
On the other hand, by \ref{sta:J-bricks}, $\{f_i\} \in \mathcal{E}_{J_i}$
and each member of~$C'_i$ is inadmissible in~$J_i - f_i$.
Recall that $G_i := (G_{i-1} \odot J_i)_{a_i,u_i,\pi_i}$ where $\pi_i$ is a bijection
between $\partial_{G_{i-1}}(a_i)$ and $\partial_{J_i}(u_i)$ that maps each member
of~$C_i$ to some member of~$C'_i$;
also, by \ref{sta:intermediate-graphs}, the corresponding splicing cut is a tight cut;
thus we invoke Corollary~\ref{cor:merging-of-equivalence-classes}
to infer that $F_{i-1} \cup \{f_i\}$ is an \ec\ of~$G_i$. In other words,
$F_i \in \mathcal{E}_{G_i}$.
\end{proof}

We let $G:= G_{q-1}$.
By \ref{sta:equivalence-class-growth}, $F_{q-1} \in \mathcal{E}_G$.
Thus $\varepsilon(G) \geq q$. It remains to show that $\kappa(G) \geq p$.
We consider the following ordered partition of~$V(G)$.
\[
((V(H_1) - a_1), (V(L_1) - u_1), (V(H_2) - a_2), (V(L_2) - u_2),\dots,(V(L_{q-1}) - u_{q-1}),V(H_q)).
\]

Since the brace~$H$ is $(p+1)$-connected,
the subgraph (of~$G$) induced by each part (of the above partition) is $p$-connected.
By our construction of~$G$, and by Lemma~\ref{lem:splicing-simple},
there is a matching of cardinality $p$ (or more)
joining any two consecutive parts of the above partition.
Using Menger's Theorem, we infer that $G$ is $p$-connected. Thus $\kappa(G) \geq p$.

\smallskip
This completes the proof of Proposition~\ref{prp:arbitrarily-high-kappa-and-varepsilon}.
\end{proof}

Thus we have shown that there exist highly-connected graphs with arbitrarily large {\ec}es;
this is in stark contrast to {\rc}es as was demonstrated by Lov{\'a}sz and Plummer
(see Theorem~\ref{thm:rc-singleton-or-doubleton}).

\bibliographystyle{plain}
\bibliography{clm}
\end{document}